\definecolor{blue}{gray}{0.0}
\titleformat{\subsubsection}[runin]
{\normalfont\normalsize\bfseries}{\thesubsubsection}{1em}{}
\numberwithin{equation}{section}
\newcommand{\inclu}[0] {\ar@{^{(}->}}
\newcommand{\argmin}{\operatornamewithlimits{argmin}}
\newtheorem{thm}{Theorem}[section]
\newtheorem{theorem}{Theorem}[section]
\newtheorem{lemma}[thm]{Lemma}
\newtheorem{conjecture}[thm]{Conjecture}
\crefname{claim}{claim}{claims}
\Crefname{claim}{Claim}{Claims}
\crefname{lem}{lemma}{lemmas}
\Crefname{lem}{Lemma}{Lemmas}
\crefname{algorithm}{algorithm}{algorithms}
\Crefname{algorithm}{Algorithm}{Algorithms}
\theoremstyle{remark}
\begin{document}

	\title{On Optimal Universal First-Order Methods\\ for Minimizing Heterogeneous Sums}
	\author{Benjamin Grimmer\footnote{\texttt{grimmer@jhu.edu}, Johns Hopkins University, Department of Applied Mathematics and Statistics}}
	\date{}
	\maketitle

	\begin{abstract}
		This work considers minimizing a sum of convex functions, each with potentially different structure ranging from nonsmooth to smooth, Lipschitz to non-Lipschitz. Nesterov's universal fast gradient method~\cite{Nesterov2015UniversalGM} provides an optimal black-box first-order method for minimizing a single function that takes advantage of any continuity structure present without requiring prior knowledge. In this paper, we show that this landmark method (without modification) further adapts to heterogeneous sums. For example, it minimizes the sum of a nonsmooth $M$-Lipschitz function and an $L$-smooth function at a rate of $ O(M^2/\epsilon^2 + \sqrt{L/\epsilon}) $ without knowledge of $M$, $L$, or even that the objective was a sum of two terms. This rate is precisely the sum of the optimal convergence rates for each term's individual complexity class. More generally, we show that sums of varied H\"older smooth functions introduce no new complexities and require at most as many iterations as is needed for minimizing each summand separately. Extensions to strongly convex and H\"older growth settings as well as simple matching lower bounds are also provided.
	\end{abstract}
    
    {\small {\bf Keywords:} finite sums, holder continuity, optimal, universal gradient method, first-order method
    
    {\bf MSC:}  65K05, 90C25, 90C30
    }
    
    \section{Introduction}
In this paper, we are interested in first-order methods for approximately solving convex optimization problems of the form
\begin{equation}\label{eq:our-problem}
    p_* = \min_{x\in Q} F(x) + \Psi(x)
\end{equation}
where $F(x) = \sum_{j\in\mathcal{J}} f_j(x)$ is given by a sum of functions $f_j$ each individually possessing some standard structure (Lipschitz continuity, smoothness, or more generally H\"older smoothness). This sum is heterogeneous in that it is a composition of several terms ranging from smooth to nonsmooth, Lipschitz to non-Lipschitz. Note typically, $F$ will not possess any of the structure held by its components. {\color{blue} The constraint set $Q$ and additive term $\Psi$ are assumed to be closed, convex and simple.}

{\color{blue} We consider methods given a first-order oracle\footnote{{\color{blue} In particular, the considered methods require an oracle producing the function value and one (sub)gradient of $F$ at the current iterate, to be called at each iteration.}}} for $F$, seeking an approximate $\epsilon>0$-minimizer {\color{blue} $y$} satisfying $F(y) + \Psi(y) - p_* \leq \epsilon$. In the landmark paper~\cite{Nesterov2015UniversalGM}, Nesterov introduced the Universal Fast Gradient Method ($\mathtt{UFGM}$), which we review in Section~\ref{sec:prelim}. This method was analyzed for minimizing $F(x)+\Psi(x)$ where $F=f$ is a single $(M,v)$-H\"older smooth function, defined for $M\geq 0$ and $v\in[0,1]$ as
\begin{equation} \label{eq:holder-smooth}
	\|\nabla f(x) - \nabla f(y)\|_* \leq M\|x-y\|^{v}, \quad x,y\in Q \ . 
\end{equation}
Note $(L,1)$-H\"older smoothness corresponds to the typical smooth optimization assumption of an $L$-Lipschitz gradient, and $(M,0)$ corresponds to the typical nonsmooth optimization assumption of having an $M$-Lipschitz objective function.
$\mathtt{UFGM}$'s iterates $y_k$ are all $\epsilon$-minimizers once
\begin{equation} \label{eq:Nesterov-rate}
	k \geq 2^{\frac{3+5v}{1+3v}}\left(\frac{M}{\epsilon}\right)^{\frac{2}{1+3v}}\xi(x_0,x^*)^{\frac{1+v}{1+3v}} \ ,
\end{equation}
where $x^*\in\argmin_{x\in Q} F(x)+\Psi(x)$ and $\xi(x_0,x^*)$ is a Bregman divergence, formally defined in~\eqref{eq:Bregman}, measuring the initial distance from optimality.
This rate cannot be improved upon with respect to all three of $M,\epsilon,\xi(x_0,x^*)$ for any H\"older exponent $v\in [0,1]$~\cite{Nemirovskii1986}. {\color{blue} Hence we say this algorithm is ``optimal''.} Moreover, $\mathtt{UFGM}$ does not require knowledge of any of these parameters{\color{blue}, applying equally across the range of nonsmooth to smooth problems. Such widely applicable methods are called ``universal''.}

In this paper, we extend this optimal, universal theory to apply to generic sums of H\"older smooth functions of the form~\eqref{eq:our-problem}. {\color{blue} We assume that the sum $F = \sum_{j\in\mathcal{J}} f_j$ is convex, that the sum rule $\sum_{j\in\mathcal{J}} \nabla f_j = \nabla F$ holds, and that each $\nabla f_j$ satisfies $(M_j,v_j)$-H\"older smoothness. Note this allows individual $f_j$ terms to be nonconvex, as the sum rule still holds for all $C^1$ functions.}
As a simple first example, consider $F(x)=\frac{1}{2}(|x|+x^2)$, which is not H\"older smooth for any $(M,v)$ despite being the sum of $(1,0)$ and $(1,1)$-H\"older smooth functions. {\color{blue} Section~\ref{sec:applications} provides more applied examples: mixtures of maximum likelihood models, vector machine training, and projection onto spectrahedrons.}

Our main result finds that the performance of $\mathtt{UFGM}$ on such a sum is simply given by summing up all the individual convergence rates~\eqref{eq:Nesterov-rate}. Hence the method's behavior can be understood as the superposition of each summand's individual H\"older smooth setting. This is stated below with proofs deferred to Section~\ref{sec:theory}.
\begin{theorem}\label{thm:main-rate}
	For any convex $F(x) = \sum_{j\in \mathcal{J}}f_j(x)$ where each $f_j$ is $(M_j,v_j)$-H\"older smooth~\eqref{eq:holder-smooth} and target accuracy $\epsilon>0$, the iterates $y_k$ of the $\mathtt{UFGM}$ are $\epsilon$-minimizers of~\eqref{eq:our-problem} for all
	\begin{equation}\label{eq:our-rate}
	    k \geq \sum_{j\in\mathcal{J}}\left[c_j\left(\frac{M_j}{\epsilon}\right)^{\frac{2}{1+3v_j}}\xi(x_0,x^*)^{\frac{1+v_j}{1+3v_j}} \right]
	\end{equation}	
	with coefficients $c_j = \frac{1+3v_j}{1+v_j}2^{\frac{2+2v_j}{1+3v_j}} |\mathcal{J}|^{\frac{1-v_j}{1+3v_j}}$ and $x^*$ minimizing~\eqref{eq:our-problem}.
\end{theorem}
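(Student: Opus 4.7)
The plan starts from the key observation that Nesterov's analysis of $\mathtt{UFGM}$ never really exploits H\"older smoothness of $F$ directly: it only uses a one-sided ``approximate smoothness'' inequality $F(y) \leq F(x) + \langle \nabla F(x), y-x\rangle + \tfrac{L}{2}\|y-x\|^2 + \tfrac{\delta}{2}$ whose parameters $(L,\delta)$ the method estimates adaptively via backtracking. For a single $(M,v)$-H\"older smooth function, a Young-inequality argument (the same one underpinning \eqref{eq:Nesterov-rate}) shows this inequality holds with $L(\delta) = M^{2/(1+v)}[\tfrac{1-v}{1+v}\cdot\tfrac{1}{\delta}]^{(1-v)/(1+v)}$ for any $\delta > 0$. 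The first step is to apply this inequality summand-by-summand with a chosen split $\{\delta_j\}_{j\in\mathcal{J}}$ and add, producing an approximate smoothness inequality for $F = \sum_j f_j$ with effective parameters $L = \sum_j L_j(\delta_j)$ and $\delta = \sum_j \delta_j$.

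Once this summed inequality is in hand, one feeds it into the existing $\mathtt{UFGM}$ convergence argument from~\cite{Nesterov2015UniversalGM} essentially verbatim: the backtracking line search is guaranteed to accept some $L_k \leq \sum_j L_j(\delta_{k,j})$ at each iteration $k$ (for any pre-specified per-iteration split $\{\delta_{k,j}\}_j$), and the standard potential-function/estimate-sequence argument then yields a convergence bound of the shape $A_K(F(y_K)+\Psi(y_K)-p_*) \leq \xi(x_0,x^*) + \sum_{k<K} A_{k+1}\delta_k/2$, where $A_K$ grows like $K^2/L_K$ and $\delta_k := \sum_j \delta_{k,j}$ is the aggregate per-iteration error. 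Nothing about this portion of the argument is sum-specific.

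The remaining work, which I expect to be the main obstacle, is converting this aggregate $K^2 \sim L\xi/\epsilon$ requirement into the sum-over-$j$ form of \eqref{eq:our-rate} with the specific coefficients $c_j$. The naive uniform split $\delta_{k,j} = \delta_k/|\mathcal{J}|$ is too crude---it would produce a factor of $|\mathcal{J}|^{2/(1+3v_j)}$ rather than the claimed $|\mathcal{J}|^{(1-v_j)/(1+3v_j)}$. The correct recipe weights the split by each summand's H\"older exponent, granting smoother terms (where $L_j(\delta_j)$ is nearly independent of $\delta_j$) a disproportionately small share of the error budget, and rougher terms a correspondingly larger share. After this optimization, I would apply subadditivity of concave power maps---generalizations of $\sqrt{\sum_j a_j} \leq \sum_j \sqrt{a_j}$---to decouple the aggregate requirement into $K \geq \sum_j k_j$, with each $k_j$ matching \eqref{eq:Nesterov-rate} for $f_j$ (modulated by the appropriate $|\mathcal{J}|$ factor). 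The final constants $c_j$ then fall out of tracking the Young-inequality bookkeeping carefully; this is mostly deterministic calculation, but picking the split weights so that the exponents on $M_j/\epsilon$, $\xi$, and $|\mathcal{J}|$ come out exactly as stated is the part most likely to require iteration.
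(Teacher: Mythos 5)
Your plan recovers the right structural skeleton---derive an inexact quadratic upper bound for the sum via a per-summand error split, feed it into Nesterov's estimate-sequence machinery unchanged, then solve the resulting growth condition on $A_k$---and the paper does exactly this (Lemma~\ref{lem:inexactness}, Theorem~\ref{thm:main-rate-full}, Lemma~\ref{lem:recurrence}). But there is a concrete error in the middle of your argument that would send you off in the wrong direction: you assert that the uniform split $\delta_{k,j} = \delta_k/|\mathcal{J}|$ is ``too crude'' and would yield $|\mathcal{J}|^{2/(1+3v_j)}$, forcing a weighted split. This is false, and the uniform split is in fact exactly what the paper uses. With $\delta_j = \delta/|\mathcal{J}|$, Nesterov's Lemma~\ref{lem:nesterov-inexactness} gives a contribution $L_j \propto |\mathcal{J}|^{(1-v_j)/(1+v_j)}$ to the inexact-smoothness constant. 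When this propagates through the $\mathtt{UFGM}$ recurrence (which after clearing denominators becomes $\sum_j \alpha_j (A_{k+1}-A_k)^{1+q_j} / A_{k+1}^{q_j} \geq 1$ with $q_j = 2v_j/(1+v_j)$ and $\alpha_j \propto |\mathcal{J}|^{(1-v_j)/(1+v_j)}$), the resulting iteration bound is $k \leq \sum_j (1+q_j)(\alpha_j A_k)^{1/(1+q_j)}$, and raising $\alpha_j$ to the power $1/(1+q_j) = (1+v_j)/(1+3v_j)$ gives $|\mathcal{J}|^{(1-v_j)/(1+v_j) \cdot (1+v_j)/(1+3v_j)} = |\mathcal{J}|^{(1-v_j)/(1+3v_j)}$, which is precisely the claimed exponent. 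You appear to have short-circuited by treating the aggregate $L$ (or $M$) as if it directly plugs into the single-function rate $(M/\epsilon)^{2/(1+3v)}$, rather than tracking how the recurrence's exponent changes the $|\mathcal{J}|$-dependence.

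The second, softer gap is that you characterize the final decoupling step as ``subadditivity of concave power maps,'' which understates what is needed. The recurrence mixes $A_{k+1}$ and $A_{k+1}-A_k$ with summand-dependent exponents, so you cannot decouple it by one application of a concavity inequality. The paper's Lemma~\ref{lem:recurrence} handles it by induction on $k$ with a two-case argument: either some single summand already contributes at least $1$ (in which case only that summand's exponent matters for the step), or all summands contribute less than $1$ (in which case the sum bound is used term-by-term), and in both cases the concavity of $z \mapsto z^{1/(1+q_j)}$ on $[A_k, A_{k+1}]$ controls the increment. This inductive case-split is the actual technical heart of the proof; your proposal gestures in its direction but treats it as routine. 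Once you correct the $|\mathcal{J}|$-counting and recognize that the recurrence needs this induction (not merely a subadditivity bound), the rest of your plan lines up with the paper.
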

Importantly, we do not modify Nesterov's method at all, only providing it a {\color{blue} first-order oracle} for the overall summation $F$. Consequently, we arrive at a stronger statement of $\mathtt{UFGM}$'s universality, it adapts to sums of structured functions without knowledge of their types or even the number of summands. 

Many optimization problems take the form of minimizing a nonsmooth Lipschitz function $f_1$ plus a smooth function $f_2$ (see the example applications in Section~\ref{sec:applications}). For such applications where $f_1$ is $(M,0)$-H\"older smooth and $f_2$ is $(L,1)$-H\"older smooth, Theorem~\ref{thm:main-rate} guarantees $\mathtt{UFGM}$ has $y_k$ as an $\epsilon$-minimizer of $F(x) = f_1(x)+f_2(x)$ for all
\begin{equation}\label{eq:our-two-term-rate}
    k \geq 8\left(\frac{M}{\epsilon}\right)^{2}\xi(x_0,x^*) + 4\sqrt{\frac{L \xi(x_0,x^*)}{\epsilon}} \ .
\end{equation}
Up to small constants, this rate is the sum of the optimal rates for nonsmooth $M$-Lipschitz minimization and $L$-smooth minimization.
The optimality of Theorem~\ref{thm:main-rate} in the Euclidean setting (where $\xi(x,y)=\frac{1}{2}\|x-y\|^2$) follows immediately from the known lower bounds (see~\cite{Lan2015} which established similar optimal universal guarantees for a level-bundle method): At least
\begin{equation} \label{eq:lower-bound}
    \max_{j\in\mathcal{J}} \left[c_j'\left(\frac{M_j}{\epsilon}\right)^{\frac{2}{1+3v_j}}R^{\frac{1+v_j}{1+3v_j}} \right] \geq \sum_{j\in\mathcal{J}} \left[ \frac{c_j'}{|\mathcal{J}|}\left(\frac{M_j}{\epsilon}\right)^{\frac{2}{1+3v_j}}R^{\frac{1+v_j}{1+3v_j}} \right]
\end{equation}
{\color{blue} first-order oracle} evaluations are required in the worst case to find an $\epsilon$-minimizers where the coefficients $c_j'>0$ depend only on $v_j$.

We numerically observe convergence matching~\eqref{eq:our-two-term-rate} in Figure~\ref{fig:p-norm} (see Section~\ref{sec:applications}). By varying the size of $M$, we see that the universal method converges at an accelerated rate until reaching an accuracy on the order of $\epsilon = O(M^{4/3})$ and then the method's convergence slows down as the nonsmooth term dominates.

\paragraph{Outline.} In the remainder of this introduction, we discuss extensions of our main result to strongly convex problems (and more generally growth/error bounded settings), scaling with respect to $|\mathcal{J}|$, and the importance of universal, blackbox results on such heterogeneous sums. Section~\ref{sec:prelim} briefly introduces Nesterov's universal fast gradient method. Then in Section~\ref{sec:applications}, we discuss applications and simple numerics showing a transition from fast smooth convergence to slow nonsmooth convergence as the dominant term in~\eqref{eq:our-two-term-rate} changes. Finally, Section~\ref{sec:theory} proves our main theorems. 

\subsection{Improved Convergence Guarantees Under H\"older Growth Bounds} \label{subsec:improved-restart}
Many works~\cite{Bolte2017,Yang2018,Roulet2020,RenegarGrimmer2018} have shown improved convergence guarantees whenever a growth bound
\begin{equation}\label{eq:holder-growth}
    F(x) +\Psi(x) - p_* \geq \mu\xi(x,x^*)^{p/2}
\end{equation}
holds, which we refer to as $(\mu,p)$-H\"older growth. These conditions are closely related to the Kurdyka-{\L}ojasiewicz condition~\cite{Kurdyka1998}, which are widespread, holding for generic subanalytic functions~\cite{Lojasiewicz1963,Lojasiewicz1993} and nonsmooth subanalytic convex functions~\cite{Bolte2007}.

In the setting of Euclidean distances, the recent work of~\cite{Roulet2020} showed for any $(M,v)$-H\"older smooth function with $(\mu,p)$-H\"older growth, a restarted variant of $\mathtt{UFGM}$ finds an $\epsilon$-minimizer within
\begin{equation}\label{eq:optimal-growth-rate}
    \begin{cases} O\left(\frac{M^{\frac{2}{1+3v}}}{\mu^{\frac{2(1+v)}{p(1+3v)}}\epsilon^{\frac{2(p-1-v)}{p(1+3v)}}}\right) & \text{ if } v<p-1\\
    O\left(\left(\frac{M}{\mu}\right)^{\frac{2}{1+3v}}\log(1/\epsilon)\right) & \text{ if } v=p-1
    \end{cases} 
\end{equation}
iterations. Our analysis directly extends this showing a convergence rate for a sum of $(M_j,v_j)$-H\"older smooth functions, which satisfies $(\mu,p)$-H\"older growth, equal to the sum of the individual $M_j,v_j,\mu,p$ rates of~\eqref{eq:optimal-growth-rate}. As our focus is not on the details of restarting schemes, we analyze a simple restarted method ($\mathtt{R\mbox{-}UFGM}$ defined in Algorithm~\ref{alg:RUFGM}) which assumes knowledge of the optimal objective value.
\begin{theorem}\label{thm:holder-growth-rate}
    For any convex $F(x) = \sum_{j\in \mathcal{J}}f_j(x)$ satisfying $(\mu,p)$-H\"older growth~\eqref{eq:holder-growth} where each $f_j$ is $(M_j,v_j)$-H\"older smooth~\eqref{eq:holder-smooth} and target accuracy $\tilde \epsilon>0$, $\mathtt{R\mbox{-}UFGM}$ finds an $\tilde\epsilon$-minimizers of~\eqref{eq:our-problem} by iteration
	\begin{equation}\label{eq:strong-convexity-rate}
	    \sum_{j\in\mathcal{J}}\left[\left(c_j''\min\left\{\frac{2^{\frac{2(p-1-v_j)}{p(1+3v_j)}}}{2^{\frac{2(p-1-v_j)}{p(1+3v_j)}}-1}, \frac{N}{2^{\frac{2(p-1-v_j)}{p(1+3v_j)}}}\right\}\right)\frac{M_j^{\frac{2}{1+3v_j}}}{\mu^{\frac{2(1+v_j)}{p(1+3v_j)}}\tilde \epsilon^{\frac{2(p-1-v_j)}{p(1+3v_j)}}}\right]  +N
	\end{equation}	
	where $N=\lceil\log_2(F(z_0)+\Psi(z_0)-p_*)/\tilde\epsilon)\rceil$ and $c_j'' = \frac{1+3v_j}{1+v_j}2^{\frac{ (v_j-1)(p-2)}{p(1+3v_j)}}|\mathcal{J}|^{\frac{1-v_j}{1+3v_j}}$.
\end{theorem}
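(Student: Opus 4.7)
The plan is to analyze $\mathtt{R\mbox{-}UFGM}$ as a sequence of halving ``restart epochs'' and reduce each epoch to a black-box application of Theorem~\ref{thm:main-rate}. Set $\Delta_0 := F(z_0)+\Psi(z_0)-p_*$ and $\Delta_k := \Delta_0\cdot 2^{-k}$. In the $k$th epoch, starting from an iterate $z_k$ whose objective gap is at most $\Delta_k$, run $\mathtt{UFGM}$ until it produces an iterate $z_{k+1}$ with gap at most $\Delta_{k+1} = \Delta_k/2$; this halving test is implementable since $p_*$ is assumed known. After $N = \lceil \log_2(\Delta_0/\tilde\epsilon)\rceil$ such epochs, $\Delta_N \leq \tilde\epsilon$, and the returned iterate is the desired $\tilde\epsilon$-minimizer.

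The central per-epoch step applies the H\"older growth bound~\eqref{eq:holder-growth} to convert the current objective gap into a Bregman-divergence bound $\xi(z_k,x^*) \leq (\Delta_k/\mu)^{2/p}$. Invoking Theorem~\ref{thm:main-rate} with target accuracy $\Delta_k/2$ and initial point $z_k$ then bounds the number of inner iterations in epoch $k$ by
\[
  T_k \leq \sum_{j\in \mathcal{J}} c_j\left(\frac{2M_j}{\Delta_k}\right)^{\!\frac{2}{1+3v_j}} \left(\frac{\Delta_k}{\mu}\right)^{\!\frac{2(1+v_j)}{p(1+3v_j)}}.
\]
Collecting the two powers of $\Delta_k$ collapses them to $\Delta_k^{-\alpha_j}$ with $\alpha_j := \frac{2(p-1-v_j)}{p(1+3v_j)}$, which is precisely the $\tilde\epsilon$-exponent appearing in~\eqref{eq:strong-convexity-rate}, while the $\mu$ and $M_j$ factors already match those of the theorem statement.

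Summing over the $N$ epochs and pulling constants out leaves each $j$th contribution proportional to $\sum_{k=0}^{N-1} 2^{\alpha_j k}$. I would bound this geometric sum two different ways: the textbook ratio estimate $(2^{\alpha_j N}-1)/(2^{\alpha_j}-1)$, combined with the ceiling bound $2^N \leq 2\Delta_0/\tilde\epsilon$, produces the $\frac{2^{\alpha_j}}{2^{\alpha_j}-1}$ branch of the stated $\min$; the cruder ``all terms at their maximum'' estimate $N\cdot 2^{\alpha_j(N-1)}$ produces the $\frac{N}{2^{\alpha_j}}$ branch, which stays finite in the degenerate case $v_j=p-1$ where $\alpha_j \to 0$ and the ratio bound diverges. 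The additive $+N$ finally absorbs the fact that each of the $N$ epochs needs at least one $\mathtt{UFGM}$ step irrespective of whether $T_k$ would otherwise round down.

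The main obstacle is purely bookkeeping: reorganizing $c_j$ from Theorem~\ref{thm:main-rate}, the factor $2^{2/(1+3v_j)}$ coming from $\epsilon = \Delta_k/2$, and the $2^{\alpha_j}$ introduced by the ceiling definition of $N$ into the precise form of $c_j''$ stated in the theorem, uniformly in $j$. Conceptually, there is no new ingredient beyond the standard restart-halving argument from~\cite{Roulet2020} applied on top of Theorem~\ref{thm:main-rate} and the H\"older growth assumption.
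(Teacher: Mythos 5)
Your proposal is correct and follows essentially the same route as the paper's proof: halving restart epochs, a per-epoch application of Theorem~\ref{thm:main-rate} with the initial Bregman divergence $\xi(z_n,x^*)$ controlled via the $(\mu,p)$-H\"older growth bound, and a geometric-sum estimate over epochs giving the two-branch $\min$ (the ratio formula when $v_j<p-1$ and the crude linear-in-$N$ bound to cover the degenerate $v_j=p-1$ case), with the additive $+N$ from the per-epoch base cost. The only differences are cosmetic indexing ($\Delta_k = 2\epsilon_k$ and summing to $N-1$ rather than $N$) and the constant bookkeeping you correctly defer.
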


\subsection{Improved Convergence Guarantees with respect to $|\mathcal{J}|$} \label{subsec:improved-J}
For any fixed number of summands, Theorem~\ref{thm:main-rate} and equation~\eqref{eq:lower-bound} agree up to their constant coefficients $c_j$ and $c'_j/|\mathcal{J}|$. Hence the fast universal method optimally adapts to any fixed sum structure.
However, the dependence on the number of summands can be improved as its power does not agree between our upper and lower bounds. We conjecture the following optimal dependence.  
\begin{conjecture}\label{con:optimal}
    The optimal first-order oracle complexity for minimizing a convex sum $\sum_{j\in\mathcal{J}} f_j(x)$ of $(M_j,v_j)$-H\"older smooth functions to a target accuracy $\epsilon>0$ given $\frac{1}{2}\|x_0-x^*\|^2\leq R$ is
    $$ \sum_{j\in\mathcal{J}} \left[\bar c_j|\mathcal{J}|^{\frac{1-3v_j}{1+3v_j}}\left(\frac{M_j}{\epsilon}\right)^{\frac{2}{1+3v_j}}R^{\frac{1+v_j}{1+3v_j}} \right]$$
    where $\bar c_j$ depends only on universal constants and $v_j$.
\end{conjecture}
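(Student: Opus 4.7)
The plan is to establish this conjecture by separately proving matching upper and lower bounds. For the \emph{lower bound}, I would partition $\mathbb{R}^n$ into $|\mathcal{J}|$ orthogonal coordinate blocks and place a Nemirovski-style resisting instance for the $(M_j, v_j)$-H\"older class on the $j$-th block. Orthogonality ensures each first-order query of $F$ decomposes cleanly into one query per block, so standard information-theoretic arguments give a per-block lower bound $\bar{c}_j (M_j/\epsilon)^{2/(1+3v_j)} R_j^{(1+v_j)/(1+3v_j)}$ subject to $\sum_j R_j \leq R$, and a Lagrangian KKT calculation optimizes the allocation $\{R_j\}$.

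This orthogonal-block construction naturally yields a $|\mathcal{J}|^{-(1+v_j)/(1+3v_j)}$ factor per summand, matching the conjecture at $v_j=1$ but falling short for smaller $v_j$; in particular, it misses the $|\mathcal{J}|^{1}$ scaling at $v_j=0$. To recover the conjectured exponent for nonsmooth summands, I would instead superimpose the individual $f_j$'s on a shared low-dimensional subspace via near-orthogonal random rotations, exploiting that the aggregated subgradient $\sum_j \nabla f_j$ then has Euclidean norm only $\Theta(\sqrt{|\mathcal{J}|}\,\max_j M_j)$ rather than the trivial $|\mathcal{J}|\,\max_j M_j$. Interpolating between the orthogonal and superimposed constructions across intermediate $v_j\in(0,1)$ would form the technical core of the lower-bound argument.

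For the \emph{upper bound}, applying $\mathtt{UFGM}$ to $F$ with its aggregated smoothness constant $\sum_j L_j$ already matches the conjectured $|\mathcal{J}|^{-1/2}$ scaling per smooth summand in the pure-smooth setting, so the task is to extend this to mixed H\"older sums. My plan is to refine the descent analysis underlying Theorem~\ref{thm:main-rate} by splitting the H\"older bound summand-by-summand before invoking Young's inequality, with weights tuned to produce the conjectured exponent $(1-3v_j)/(1+3v_j)$ after summation. \textbf{The main obstacle} is that $\mathtt{UFGM}$ accesses only the aggregated oracle of $F$, so any per-summand descent estimate must be justified implicitly; attaining the conjectured upper bound in the mixed case may therefore require algorithmic modifications (such as randomized summand sampling or per-summand oracle access) beyond the universal black-box method of this paper.
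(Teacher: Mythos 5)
This statement is a \emph{conjecture} that the paper explicitly leaves open; there is no ``paper's own proof'' to compare against. The paper's Theorem~\ref{thm:main-rate} gives an upper bound with a factor $|\mathcal{J}|^{(1-v_j)/(1+3v_j)}$ per summand and equation~\eqref{eq:lower-bound} gives a lower bound with a factor $|\mathcal{J}|^{-1}$ per summand, and the paper only \emph{motivates} the conjectured intermediate exponent $(1-3v_j)/(1+3v_j)$ by the uniform-split example $f_j = f/|\mathcal{J}|$. Theorem~\ref{thm:main-rate-implicit} narrows but does not close the gap. You were therefore asked to prove something the paper does not prove, and you should have flagged this explicitly rather than attempting a proof.

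As a sketch of an attack on the open problem, your proposal is directionally sensible but incomplete, and you acknowledge as much. On the lower-bound side, note that the homogeneous cases are actually easy and do not need a new construction: if all $v_j=0$ with common $M$, the sum $\sum_j f_j$ is just $|\mathcal{J}|M$-Lipschitz and the classical single-function lower bound $\bigl(|\mathcal{J}|M/\epsilon\bigr)^2 R$ already reproduces the conjectured $|\mathcal{J}|^{+1}$ factor per term; similarly for all $v_j=1$ the sum is $|\mathcal{J}|L$-smooth and one recovers $|\mathcal{J}|^{-1/2}$ per term. The genuine difficulty, as you partly recognize, is the heterogeneous mixed case, where neither orthogonal blocks (which, as you compute, overshoot to $|\mathcal{J}|^{-(1+v_j)/(1+3v_j)}$) nor the aggregated-Lipschitz view directly apply, and your ``superimposed near-orthogonal rotation'' idea with an interpolation is a plausible but unsubstantiated bridge. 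On the upper-bound side, your stated plan to ``split the H\"older bound summand-by-summand before invoking Young's inequality'' does not identify a concrete modification to Lemma~\ref{lem:inexactness} or the recurrence~\eqref{eq:our-recurrence} that would reduce the exponent from $(1-v_j)/(1+3v_j)$ to $(1-3v_j)/(1+3v_j)$, and the ``main obstacle'' paragraph concedes the black-box oracle model may simply preclude the conjectured upper bound for $\mathtt{UFGM}$ without modification. In short: the proposal is an honest research sketch, not a proof, and it leaves open precisely the mixed-$v_j$ regime that makes the conjecture nontrivial.
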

To motivate this conjecture and the necessity of a dependence on $|\mathcal{J}|$ consider the following setting:
Given a $(M,v)$-H\"older-smooth function $f$ to minimize, the optimal convergence rate is given by~\eqref{eq:Nesterov-rate}. For any $|\mathcal{J}|$, we can write this problem as minimizing a sum of $(M/|\mathcal{J}|,v)$-H\"older smooth functions given by $\sum_{j\in\mathcal{J}} [f(x)/|\mathcal{J}|]$. The optimal convergence rate for each summand here is on the order of $(M/|\mathcal{J}|\epsilon)^{2/(1+3v)})\xi(x_0,x^*)^{(1+v)/(1+3v)}$. For the sum these individual rates to match the optimal convergence guarantee for $f$, the coefficient $\bar c$ when summing the individual weights must depend on $|\mathcal{J}|$ as $$ \sum_{j\in\mathcal{J}}\left[\bar c\left(\frac{M}{|\mathcal{J}|\epsilon}\right)^{\frac{2}{1+3v}}\xi(x_0,x^*)^{\frac{1+v}{1+3v}} \right]= \Theta\left(\left(\frac{M}{\epsilon}\right)^{\frac{2}{1+3v}}\xi(x_0,x^*)^{\frac{1+v}{1+3v}}\right) \ , $$
amounting to $ \bar c=\Theta\left(|\mathcal{J}|^{\frac{1-3v}{1+3v}}\right)$.
As a direction toward tightening this gap in our theory, we derive the following implicitly defined convergence guarantee for $\mathtt{UFGM}$ in Section~\ref{subsec:implicit}.
\begin{theorem}\label{thm:main-rate-implicit}
	For any convex $F(x) = \sum_{j\in \mathcal{J}}f_j(x)$ where each $f_j$ is $(M_j,v_j)$-H\"older smooth~\eqref{eq:holder-smooth} and target accuracy $\epsilon>0$, the iterates $y_k$ of the $\mathtt{UFGM}$ are $\epsilon$-minimizers of~\eqref{eq:our-problem} for all $k \geq 5K$ where $K$ is the unique positive solution to
    $$\sum_{j\in \mathcal{J}} \frac{|\mathcal{J}|^{\frac{1-v_j}{1+v_j}}M_j^{\frac{2}{1+v_j}}\xi(x_0,x^*)}{\epsilon^{\frac{2}{1+v_j}}}K^{\frac{-(1+3v_j)}{1+v_j}} = 1 \ .$$
\end{theorem}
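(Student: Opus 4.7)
The plan is to refine the proof of Theorem~\ref{thm:main-rate}, retaining the sum structure more carefully rather than collapsing it into a closed-form bound. Both theorems are derived from the same ingredients: (i) Nesterov's $\mathtt{UFGM}$ convergence estimate, which measures progress as $\xi(x_0,x^*)$ divided by an accumulated step-size $A_k$ plus a controlled inexactness budget $\delta$, and (ii) Young's inequality, which converts each $f_j$'s $(M_j,v_j)$-H\"older smoothness into a quadratic upper model at per-summand tolerance $\delta_j$.

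First I would invoke the summation of Young's bounds with $\delta_j = \delta/|\mathcal{J}|$ (the same split used in the proof of Theorem~\ref{thm:main-rate}), giving an effective smoothness for $F$ of
\[
L_\delta \leq \sum_{j\in\mathcal{J}} \left(\frac{(1-v_j)|\mathcal{J}|}{(1+v_j)\delta}\right)^{(1-v_j)/(1+v_j)} M_j^{2/(1+v_j)}.
\]
The remainder of the $\mathtt{UFGM}$ analysis then produces an upper bound on $F(y_k) + \Psi(y_k) - p_*$ which, after setting $\delta = \epsilon$, can be written as a sum over $j$ of terms of the form
\[
|\mathcal{J}|^{(1-v_j)/(1+v_j)}\, M_j^{2/(1+v_j)}\, \xi(x_0,x^*)\, \epsilon^{-2/(1+v_j)}\, k^{-(1+3v_j)/(1+v_j)}.
\]
The universal analysis ensures each H\"older exponent $v_j$ propagates into the exponent on $k$ with the particular ratio $(1+3v_j)/(1+v_j)$ underlying Nesterov's rate~\eqref{eq:Nesterov-rate}.

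The step in the proof of Theorem~\ref{thm:main-rate} that produces the explicit coefficients $c_j$ is a norm-comparison-type bound applied across the summands to eliminate fractional powers and arrive at the clean additive form. Here I would skip that step, instead directly imposing that the sum of the above contributions is at most a universal constant; then $k \geq 5K$ absorbs both that constant and the line-search slack into the factor $5$, giving exactly the fixed-point equation defining $K$ in the theorem statement.

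The main obstacle is ensuring that the $\mathtt{UFGM}$ analysis genuinely yields one summand contribution per $j$ with the correct $(1+3v_j)/(1+v_j)$ exponent on $k$---rather than the $2$ one would get from a na\"ive acceleration argument applied to the combined $L_\delta$. This exponent emerges from the self-referential nature of $\mathtt{UFGM}$'s adaptive line-search: step-sizes $a_i$ depend on the line-searched $L_i$, which in turn depends on a tolerance $\delta_i$ that shrinks with iteration. Transferring this bookkeeping through the heterogeneous sum so that each summand contributes its own H\"older-exponent-dependent power of $k$ is the technical heart of the argument, and is precisely what distinguishes the implicit Theorem~\ref{thm:main-rate-implicit} from the explicit Theorem~\ref{thm:main-rate}.
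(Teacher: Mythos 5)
Your setup is right as far as it goes---splitting the inexactness budget as $\delta_j = \delta/|\mathcal{J}|$ and invoking the H\"older-smoothness-to-quadratic-model conversion (the paper's Lemma~\ref{lem:inexactness}) is indeed the first step, and the exponent $(1+3v_j)/(1+v_j)$ is the right one to target. But the claim that ``the remainder of the $\mathtt{UFGM}$ analysis then produces an upper bound on $F(y_k)+\Psi(y_k)-p_*$ which can be written as a sum over $j$'' of terms each carrying its own power $k^{-(1+3v_j)/(1+v_j)}$ does not hold, and is where the argument breaks. What the analysis actually produces is a single scalar recurrence on $A_k$,
\[
\sum_{j\in\mathcal{J}}\left[\frac{2|\mathcal{J}|^{\frac{1-v_j}{1+v_j}}M_j^{\frac{2}{1+v_j}}}{\epsilon^{\frac{1-v_j}{1+v_j}}} \frac{(A_{k+1}-A_{k})^{\frac{1+3v_j}{1+v_j}}}{A_{k+1}^{\frac{2v_j}{1+v_j}}}\right] \geq 1,
\]
in which all summands are \emph{coupled} through the common sequence $A_k$. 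The function gap is bounded by $\xi(x_0,x^*)/A_k$ plus $\epsilon/2$, a single quantity, not a sum of per-$j$ gaps; there is no decoupled per-summand contribution to the objective suboptimality with a per-summand power of $k$. Postulating such a decomposition and then ``directly imposing that the sum is at most a universal constant'' reproduces the right fixed-point equation only by coincidence of algebra---it is not a proof, and nothing in the $\mathtt{UFGM}$ machinery justifies that intermediate bound. You flag this as ``the main obstacle'' yourself, but the proposal never actually resolves it.

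The missing idea is a new lemma that lower-bounds the growth of \emph{any} sequence satisfying the coupled recurrence, directly in terms of the root $C$ of the implicit equation $\sum_j \alpha_j \Delta C^{-(1+q_j)} = 1$. The paper (Lemma~\ref{lem:recurrence-implicit}) does this via a dyadic bucketing argument: it partitions the range of $A_k$ into geometric intervals $[2^n, 2^{n+1}]/\sum\alpha_j$, shows that within each such interval the recurrence forces $A_{k+1} \geq \tfrac{C(2^{n+1}A_1)}{C(2^{n+1}A_1)-1}A_k$ so that at most $\log(2)\,C(2^{n+1}A_1)$ iterations occur there, and then sums over buckets using the scaling property $\lambda C(\delta) \leq C(\lambda\delta) \leq \sqrt{\lambda}\,C(\delta)$ for $\lambda\leq 1$ to bound the total by a geometric series with ratio $1/\sqrt{2}$. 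That geometric series is exactly where the constant $2\sqrt{2}\log(2)/(\sqrt{2}-1) < 5$ comes from; it is not slack to be ``absorbed'' but a cost paid by the argument. Without this lemma---or some substitute way to pass from the coupled recurrence to a count of iterations---your proof does not go through.
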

\noindent A short calculation\footnote{Namely, supposing all $(M_j,v_j) = (M/|\mathcal{J}|, v)$, the defining equation for $K$ simplifies to $$ |\mathcal{J}|\left(\frac{|\mathcal{J}|^{\frac{1-v}{1+v}}(M/|\mathcal{J}|)^{\frac{2}{1+v}}\xi(x_0,x^*)}{\epsilon^{\frac{2}{1+v}}}K^{\frac{-1-3v}{1+v}}\right) = \frac{M^{\frac{2}{1+v}}\xi(x_0,x^*)}{\epsilon^{\frac{2}{1+v}}}K^{\frac{-1-3v}{1+v}} = 1 \ ,$$ solved by $K = \Theta\left(\left(\frac{M}{\epsilon}\right)^{\frac{2}{1+3v}}\xi(x_0,x^*)^{\frac{1+v}{1+3v}}\right)$, matching the optimal rate for $(M,v)$-H\"older smooth minimization.} shows this maintains the optimal guarantee for the motivating example above as $|\mathcal{J}|$ grows (unlike our Theorem~\ref{thm:main-rate}).

\subsection{Related Works}

\paragraph{Importance of Universal, Blackbox Guarantees}
An algorithm is universal if it applies across a range of problem parameters (e.g., different levels of H\"older-smoothness or the existence of different growth conditions). The universal fast gradient method of Nesterov is one such algorithm, applying to a generic H\"older smooth objective $f+\Psi$, only needing access to function and first-order evaluations $(f(x_k),\nabla f(x_k))$ and a target accuracy $\epsilon>0$.
A few varied examples of other universal optimization methods and analysis: bundle methods~\cite{Lan2015,DiazGrimmer2021}, solving stochastic variational inequalities~\cite{Anthonakopoulos2021}, and Newton's method~\cite{Doikov2022}.

The universal level-bundle method analysis of Lan~\cite{Lan2015} applies widely to compositions $\Psi(f_1(x),\dots, f_m(x))$ with heterogeneous $f_j$. As a special case, their theory covers summations via $\Psi(v) = \sum_j v_j$ with rates matching our Theorem~\ref{thm:main-rate}. Their level bundle method requires more structure than we assume (needing compactness of $Q$ to compute lower bounds on the optimal value from) and operates using a projection subproblem rather than $\mathtt{UFGM}$'s Bregman step~\eqref{eq:Bregman-step}.

Typically universal methods are adaptive or blackbox, meaning they do not require the input of constants related to whatever problem structures exist. Adaptivity is of real practical importance, classically motivating in linesearching and trust-region methodologies~\cite{NocedalWright-textbook}. The restarting schemes~\cite{Roulet2020,RenegarGrimmer2018} adapt to whatever H\"older growth exists. Nesterov's universal methods adapt to whatever H\"older smoothness exists, learning an inexact smoothness constant $L_k$ over time. Moreover, from our analysis, it adapts to sums of H\"older smooth terms without knowledge of how $F(x)=\sum_{j\in\mathcal{J}} f_j(x)$ can be written as a sum (i.e., the number of terms and H\"older smoothness of each) is used.

As a benefit of this adaptivity, if many such formulations exist, converge occurs at least as fast as the infimum of~\eqref{eq:our-rate} or~\eqref{eq:strong-convexity-rate} over all such formulations. For example, this offers the following improvement for minimizing a $\mu$-strongly convex function $f$ over a compact domain $Q$. Let $M(f,Q)$ denote the Lipschitz constant of $f$ on $Q$. Then the classic convergence guarantee gives a rate of
$ O\left(M(f,Q)^2/\mu\epsilon \right). $
However, $f$ can be rewritten as the sum of two convex functions $(f(x)-\frac{\mu}{2}\|x\|^2) +\frac{\mu}{2}\|x\|^2$. Then Theorem~\ref{thm:holder-growth-rate} gives the following rate with potentially much smaller constants
$$ O\left(\frac{M(f-\frac{\mu}{2}\|\cdot\|^2, Q)^2}{\mu\epsilon} + \log\left(\frac{f(x_0)-\inf_{x\in Q}f(x)}{\epsilon}\right)\right)\ . $$

\paragraph{Importance of Heterogeneous Summations}
Minimizing finite sums have attracted substantial interest, typically assuming a common structure among all the summands. Variance reduction gives a tractable stochastic approach when the number of terms is large. However, such methods do not fit within our blackbox model as they rely on knowing the structure of $F$. Arjevani et al.~\cite{Arjevani2020} provide an approach to minimizing sums without using indexing information.

The recent work of Wang and Zhang~\cite{Wang2022} is motivated similarly to us. They consider minimizing heterogeneous sums of $L_i$-smooth $\mu_i$-strongly convex functions given a gradient oracle for individual terms. They show a method using variance reduction attains the optimal rate of $O\left(m+\frac{\sum_i \sqrt{L_i}}{\sqrt{\sum_i\mu_i}} \log(1/\epsilon)\right)$ individual function evaluations. This is the same rate with respect to $L_i$ and $\mu_i$ given by applying our Theorem~\ref{thm:holder-growth-rate} as such a sum has $(\sum \mu_i,2)$-H\"older growth\footnote{Note this comparison is somewhat superficial as we assume a more expensive oracle giving (sub)gradients of the whole objective. Given our oracle, considering all the terms as one $L=\sum L_i$-smooth term gives a faster $O\left(\frac{\sqrt{\sum_i L_i}}{\sqrt{\sum_i\mu_i}} \log(1/\epsilon)\right)$ rate.}.

Several past works have considered sums of smooth and nonsmooth but Lipschitz terms. The optimal rate~\eqref{eq:our-two-term-rate} in this setting were shown by~\cite{Xiao2009,Chen2012,Ghadimi2012} for several dual averaging methods. A normalized subgradient method was analyzed in~\cite{Grimmer2019} that converges at the sum of (sub)gradient descent's suboptimal convergence rates. In part, this work aims to follow up on these ideas.

Section~\ref{sec:applications} discusses two applications where heterogeneous sums naturally occur (maximum likelihood estimation over heterogeneous data sources and support vector machine training). Since our guarantees are universal and optimal {\color{blue} for general summation minimization, further improvements on these problems would require algorithms customized to the particular structure of such problems}.
    \section{Preliminaries and the Universal Fast Gradient Method}\label{sec:prelim}
Notationally, we closely follow~\cite{Nesterov2015UniversalGM} to ease the development of our analysis. We assume access to a first-order oracle
$ x \mapsto (F(x), \nabla F(x))$
where $\nabla F(x)$ is a subgradient of $F$ at $x$. (Note a subgradient rather than gradient oracle is needed here since $(M,0)$-H\"older smoothness only corresponds to Lipschitz continuity of the objective function, rather than some continuity of the gradient.) By the sum rule of subgradient calculus, this could be implemented as a sum over each summand $(F(x), \nabla F(x)) = \sum_{j\in\mathcal{J}}(f_j(x), \nabla f_j(x))$.

For any convex $d(x)$ satisfying the following strong convexity condition (with parameter one)
$ d(y) \geq d(x) + \langle \nabla d(x), y-x\rangle + \frac{1}{2}\|y-x\|^2$ for all $x,y\in \mathrm{rint }\ Q , $
we consider the associated {\it Bregman distance} (or divergence) 
\begin{equation} \label{eq:Bregman}
    \xi(x,y) = d(y) - (d(x) + \langle \nabla d(x), y-x\rangle) \ .
\end{equation}
When $d(x) = \frac{1}{2}\|x\|^2$, this recovers the Euclidean distance $\frac{1}{2}\|y-x\|^2$.

We assume that we can compute the following {\it Bregman mapping} given functions $F,\Psi$ and set $Q$ (either in closed form or by some efficient subroutine)
\begin{equation} \label{eq:Bregman-step}
    \argmin_{y\in Q}\left\{ \Psi_M(x,y) := F(x)+\langle \nabla F(x), y-x\rangle + M\xi(x,y) + \Psi(y)\right\}.
\end{equation}
This amounts to requiring that the constraints $Q$ and generic function $\Psi$ are sufficiently simple.
Based on this operation, given any target accuracy $\epsilon>0$, the Universal Fast Gradient Method {\color{blue} is as defined in} Algorithm~\ref{alg:UFGM}. 

\begin{algorithm}
    \caption{Universal Fast Gradient Method ($\mathtt{UFGM}$) of Nesterov~\cite{Nesterov2015UniversalGM}}\label{alg:UFGM}
    \begin{algorithmic}[1]
        \STATE \textbf{Initialization}: Choose $x_0\in Q$, $\epsilon>0$, $L_0>0$. Define $y_0=x_0$, $A_0=0$, $\phi_0(x)=\xi(x_0,x)$.
        \FOR{$k = 0,1,2,\dots$}
        \STATE Find $v_k = \argmin_{x\in Q} \phi_{k}(x)$.
        \STATE Find the smallest integer $i_k\geq 0$ such that the definitions
        \begin{align*}
            a_{k+1,i_k}^2     &= \frac{1}{2^{i_k}L_k}(A_k + a_{k+1,i_k}), \ a_{k+1,i_k} >0\\
            A_{k+1,i_k}       &= A_k + a_{k+1,i_k}\\
            \tau_{k,i_k}      &= \frac{a_{k+1,i_k}}{A_{k+1,i_k}}\\
            x_{k+1,i_k}       &=\tau_{k,i_k}v_k + (1-\tau_{k,i_k})y_k\\
            \hat{x}_{k+1,i_k} &= \argmin_{y\in Q}\left\{\xi(v_k,y) + a_{k+1,i_k}[\langle\nabla F(x_{k+1,i_k}),y\rangle + \Psi(y)]\right\}\\
            y_{k+1,i_k} &= \tau_{k,i_k}\hat{x}_{k+1,i_k} + (1-\tau_{k,i_k})y_k
        \end{align*}
        satisfy
        \begin{align}
             F(y_{k+1,i_k}) \leq F(x_{k+1,i_k}) &+ \langle\nabla F(x_{k+1,i_k}), y_{k+1,i_k} - x_{k+1,i_k}\rangle \nonumber\\
             &+ 2^{i_k-1}L_k\|y_{k+1,i_k} - x_{k+1,i_k}\|^2 + \frac{\epsilon\tau_{k,i_k}}{2}. \label{eq:descent}
        \end{align}
        \STATE Set $x_{k+1}=x_{k+1,i_k}$, $y_{k+1} = y_{k+1,i_k}$, $a_{k+1}=a_{k+1,i_k}$, $\tau_k=\tau_{k,i_k}$ and define
        \begin{align*}
            A_{k+1} &= A_k + a_{k+1}\\
            L_{k+1} &= 2^{i_k-1}L_k\\
            \phi_{k+1}(x) & = \phi_k(x) + a_{k+1}[F(x_{k+1}) + \langle\nabla F(x_{k+1}), x-x_{k+1}\rangle + \Psi(x)].
        \end{align*}
        \ENDFOR
    \end{algorithmic}
\end{algorithm}

The key lemma behind Nesterov~\cite{Nesterov2015UniversalGM}'s analysis of universal methods across these different H\"older-smooth settings is the following unifying condition. In any setting, the gradient of $F$ yields inexact quadratic upper bounds.
\begin{lemma}[Nesterov~\cite{Nesterov2015UniversalGM}, Lemma 2] \label{lem:nesterov-inexactness}
	Suppose $F$ is $(M,v)$-H\"older smooth~\eqref{eq:holder-smooth}. Then for any $\delta>0$ and
	$ L \geq \left[\frac{1-v}{1+v}\cdot\frac{1}{\delta}\right]^{\frac{1-v}{1+v}}M^{\frac{2}{1+v}}$,
	\begin{equation*}
	F(y) \leq F(x)+\langle \nabla F(x), y-x\rangle + \frac{L}{2}\|y-x\|^2 + \frac{\delta}{2}, \quad \forall x,y\in Q \ .
	\end{equation*}
\end{lemma}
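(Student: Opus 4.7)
The plan is to follow the classical two-step argument: first derive an exact H\"older-type quadratic-ish upper bound from the gradient continuity hypothesis via the fundamental theorem of calculus, then convert the resulting $\|y-x\|^{1+v}$ term into the inexact quadratic upper bound via a one-variable optimization (equivalently, a weighted Young's inequality).

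For step one, I would write
\[
F(y) - F(x) - \langle \nabla F(x), y-x\rangle = \int_0^1 \langle \nabla F(x+t(y-x)) - \nabla F(x),\, y-x\rangle\, dt,
\]
then bound the inner product by H\"older's inequality between the norm $\|\cdot\|$ and its dual $\|\cdot\|_*$, apply the $(M,v)$-H\"older smoothness hypothesis~\eqref{eq:holder-smooth} to each increment, and evaluate $\int_0^1 t^v\, dt = 1/(1+v)$. This yields the intermediate inequality
\[
F(y) \leq F(x) + \langle \nabla F(x), y-x\rangle + \frac{M}{1+v}\|y-x\|^{1+v}.
\]

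For step two, it suffices to show that the scalar inequality $\frac{M}{1+v}r^{1+v} \leq \frac{L}{2}r^2 + \frac{\delta}{2}$ holds for every $r \geq 0$ under the stated lower bound on $L$. Writing $g(r) = \frac{M}{1+v}r^{1+v} - \frac{L}{2}r^2$, the maximizer solves $Mr^v = Lr$, giving $r^* = (M/L)^{1/(1-v)}$, at which a short calculation yields
\[
g(r^*) = \frac{1-v}{2(1+v)}\cdot\frac{M^{2/(1-v)}}{L^{(1+v)/(1-v)}}.
\]
Requiring $g(r^*) \leq \delta/2$ and solving for $L$ reproduces exactly the hypothesis $L \geq \left[\frac{1-v}{1+v}\cdot\frac{1}{\delta}\right]^{(1-v)/(1+v)} M^{2/(1+v)}$, completing the proof when $v<1$. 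The edge case $v=1$ should be handled by noting that the hypothesis collapses to $L \geq M$ and the intermediate inequality already gives $F(y) \leq F(x) + \langle \nabla F(x), y-x\rangle + \frac{M}{2}\|y-x\|^2$, which suffices.

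The main (and only) technical obstacle is the scalar optimization in step two; everything else is a textbook application of the fundamental theorem of calculus and the H\"older smoothness hypothesis. Since the optimal exponents in the bound on $L$ are prescribed by the stationarity condition $r^{1-v} = M/L$, tuning the constants is straightforward and the resulting bound is tight, which explains why the hypothesis on $L$ in the lemma cannot be weakened.
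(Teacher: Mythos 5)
Your proof is correct and follows exactly the route in Nesterov's original derivation of this lemma (the paper cites it from~\cite{Nesterov2015UniversalGM} without reproducing the proof): the fundamental-theorem-of-calculus bound giving the $\frac{M}{1+v}\|y-x\|^{1+v}$ intermediate inequality, followed by the scalar optimization $\sup_r\big[\tfrac{M}{1+v}r^{1+v}-\tfrac{L}{2}r^2\big]\le\tfrac{\delta}{2}$, whose stationary point $r^*=(M/L)^{1/(1-v)}$ produces precisely the stated threshold on $L$. Your separate treatment of $v=1$ is the right way to sidestep the singular exponent, and the $v=0$ case is covered provided $\nabla F$ is read as a fixed subgradient selection (as the paper does), so the argument is complete.
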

This lemma ensures that for large enough $i_k$ the condition~\eqref{eq:descent} is satisfied. In Section~\ref{sec:theory}, we generalize this result to similarly apply to sums of H\"older smooth functions (see Lemma~\ref{lem:inexactness}). As a result, the iterates of $\mathtt{UFGM}$ are well-defined in our more general setting of interest.

On average (amortized), $\mathtt{UFGM}$ uses at most four first-order oracle evaluations $(F(x_k),\nabla F(x_k))$ per iteration.
Stopping criteria and the details of this amortized analysis are given in Nesterov's original development~\cite{Nesterov2015UniversalGM} and we refer any interested reader there.

\section{Motivating Applications and Numerics}\label{sec:applications}
Here we consider {\color{blue} three} applications with heterogeneous sums for the objective function. Simple numerics are conducted {\color{blue} for each} showing the universal fast gradient method converges in much the same fashion as our theory predicts {\color{blue} and contrasting against alternative methods}.

\paragraph{Mixtures of Maximum Likelihoods Models.}
Given observed features $A\in \mathbb{R}^{n\times d}$ and labels $b\in\mathbb{R}^n$, $\ell_p$ regression fits a model $x$ by computing the Maximum Log-likelihood Estimator (MLE) via $\min_{x\in Q} \|Ax-b\|_p^p $
where $Q\subseteq \mathbb{R}^d$ reflects prior knowledge or regularization (for example, imposing nonnegativity $x\geq 0$ or seeking sparsity $\|x\|_1\leq \delta$).
This corresponds to the measurements with Gaussian noise (when $p=2$), Laplacian noise (when $p=1$), and allows for heavy tails (whenever $p<2$). The improved performance of estimators outside of Gaussian settings $p\neq 2$ is well-documented (see~\cite{nyquist1980recent,Money1982,Llorente2013,Efron1986}), although this requires careful analysis to identify a suitable value of $p$.

Suppose data sets $(A_j,b_j)$ from independent sources $j\in\{1\dots J\}$ are aggregated, each with its own, different source of errors with log-likelihoods proportional to $-\|A_jx-b_j\|^{p_j}_{p_j}$ for some $p_j\in [1,2]$. Then the maximum likelihood estimator given all the $J$ data sets is given by
\begin{equation}
    \min_{x\in Q} F(x) = \sum_{j=1}^J \|A_j x -b_j\|^{p_j}_{p_j} \ . \label{eq:mixture-model}
\end{equation}
Observing that each $\|A_j x -b_j\|^{p_j}_{p_j}$ is H\"older smooth with exponent $p_j-1$, we conclude $\mathtt{UFGM}$ can compute the maximum likelihood estimator for aggregated independent data sets in time proportional to that of computing separate maximum likelihood estimators for each data set. {\color{blue} Such problems also arise when the membership information (which samples came from which error distribution) is unknown: the EM algorithm~\cite{Dempster1977,Meng1997} would at each step estimate membership probabilities and then solve a resulting MLE mixture model.}

\begin{figure}\centering
	\begin{subfigure}[b]{0.45\textwidth}
		\includegraphics[width=\textwidth]{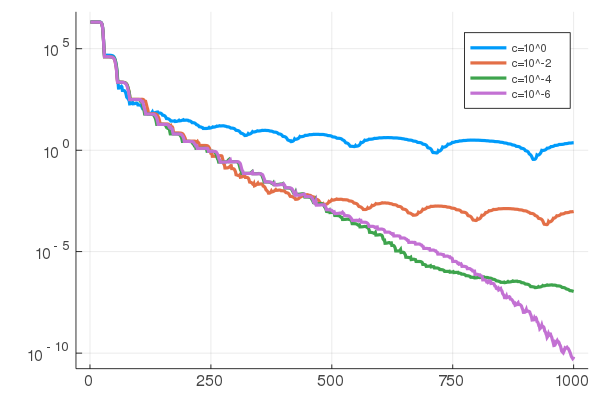}
		\caption{$\mathtt{UFGM}$}
	\end{subfigure}
	\begin{subfigure}[b]{0.45\textwidth}
		\includegraphics[width=\textwidth]{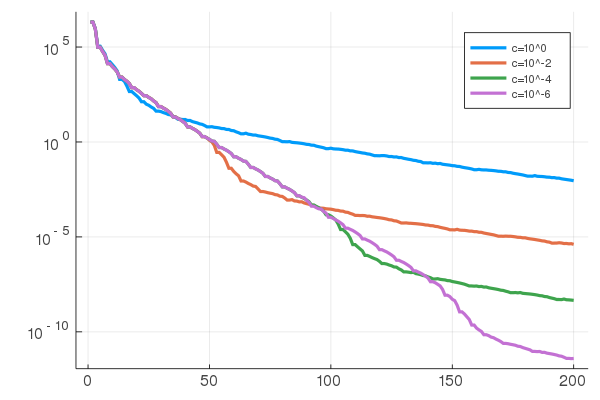}
		\caption{$\mathtt{R\mbox{-}UFGM}$}
	\end{subfigure}
	\caption{Iterations vs Objective Gap from applying $\mathtt{UFGM}$ and $\mathtt{R\mbox{-}UFGM}$ to $F(x)=\frac{1}{2}\|A_1x-b_1\|^2_2 + c\|A_2x-b_2\|_1$  with $x_0=0,L_0=1,\epsilon=10^{-9}$, standard normal $A_i\in\mathbb{R}^{2000\times 1000},x^*\in\mathbb{R}^{1000}$, and $b_i=A_ix^*$ for various values of $c>0$.}\label{fig:p-norm}
\end{figure}
Figure~\ref{fig:p-norm} applies $\mathtt{UFGM}$ and $\mathtt{R\mbox{-}UFGM}$ to $F(x)=\frac{1}{2}\|A_1x-b_1\|^2_2 + c\|A_2x-b_2\|_1$ with varied $c>0$. In all cases, we see fast convergence early on as $\epsilon \approx F(x)-p_*$ is large and so the smooth convergence rate dominates the nonsmooth component $O(1/\sqrt{\epsilon}) >> O(c^2/\epsilon^2)$. For each problem instance, the method suddenly slows down once the nonsmooth rate dominates (around height $\epsilon\approx c^{4/3}$). Noting $F(\cdot)$ here is $\lambda_{min}(A_1^TA_1)$-strongly convex, we see the speedup from restarting predicted by Theorem~\ref{thm:holder-growth-rate}.

\paragraph{Support Vector Machines.}
Consider the unconstrained support vector machine (SVM) training problem given $n$ data points $(x_i,y_i)\in\mathbb{R}^d\times \{\pm 1\}$ and $\lambda>0$
$$ \min_{w\in\mathbb{R}^d} F(w) = \sum_{i=1}^n \max\{0, 1-y_i \cdot x_i^Tw\} + \frac{\lambda}{2}\|w\|_2^2 \ . $$
Note that $F$ is $\lambda$-strongly convex and the sum of an $M$-Lipschitz, nonsmooth function and a function with $\lambda$-Lipschitz gradient. Their sum is neither smooth nor Lipschitz globally. Previous works have overcome this limitation by directly bounding the iterates~\cite{Bach2012} or using a normalized subgradient method~\cite{Grimmer2019}.

Our Theorem~\ref{thm:holder-growth-rate} ensures $\mathtt{R\mbox{-}UFGM}$ finds an $\epsilon$-minimizer within
$$ \frac{32M^2}{\lambda\epsilon} + 4\log\left(\frac{F(w_0)-\inf F}{\epsilon}\right)$$
iterations. The dual averaging methods of~\cite{Chen2012} also attain this fast rate (although their methods need to know several problem constants to be applied). Such fast algorithms are resilient to the choice of $w_0$ as it only appears logarithmically.

{\color{blue}
\paragraph{Orthogonal Projection onto Spectrahedrons.}
Lastly, consider the task of orthogonal projection onto a spectrahedron $\mathcal{S} = \{y\in\mathbb{R}^d \mid C-\mathcal{A}y\succeq 0\}$. Projecting a given $\bar y\in\mathbb{R}^d$ onto $\mathcal{S}$ corresponds to minimizing $\|y-\bar y\|^2_2$ subject to $y\in\mathcal{S}$. Using an exact penalization (under strong duality and strict complementarity assumptions mirroring~\cite[Lemma 6.1]{Ding2021-storage}), for large enough $\alpha>0$, this problem (called SpecProj) amounts to minimizing the sum
$$ \min_{y\in\mathbb{R}^d} F(w) = \frac{1}{2}\|y - \bar y\|_2^2 + \alpha \max\{\lambda_{max}(\mathcal{A}y-C), 0\} \ . $$

Like that of SVM training, this objective is a strongly convex sum of smooth and Lipschitz components; hence, our theory applies.
For both of these problems, we compare the universal performance of $\mathtt{UFGM}$ against various parameterizations of the standard subgradient and regularized dual averaging methods. On three instances from the SVM dataset~\cite{libsvm} and three spectrahedrons $\{ y \mid \mathrm{diag}(y) \preceq L \}$\footnote{Such spectrahedrons occur as the dual feasible region of MAX-CUT SDP relaxations~\cite{Goemans1995}.} with $L$ from the matrix dataset~\cite{Gset}, Figure~\ref{fig:performance} shows $\mathtt{UFGM}$ performs similarly (albeit sometimes slower) to these simpler methods when they are well-tuned. (Some theory for subgradient methods and dual averaging with generic stepsizes in non-Lipschitz settings was recently developed in~\cite{GrimmerLi2023}.)

\begin{figure}\centering
	\begin{subfigure}[b]{0.45\textwidth}
		\includegraphics[width=\textwidth]{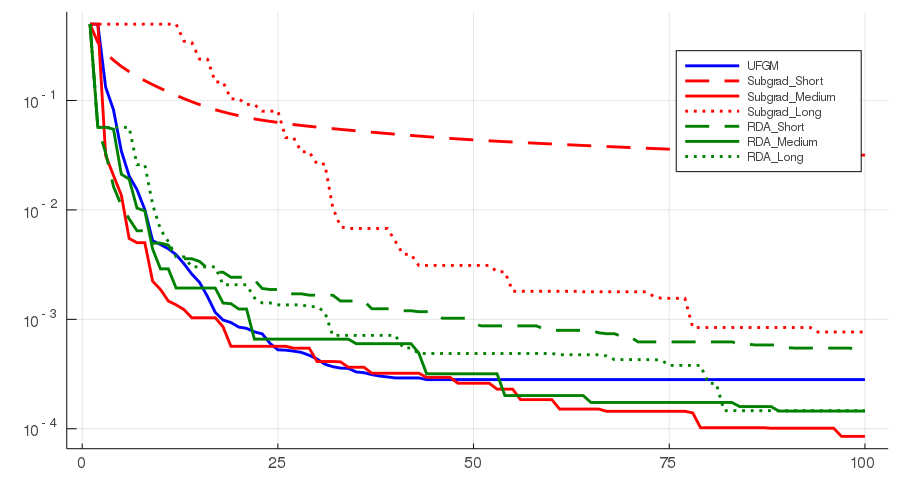}
		\caption{SVM on ``colon-cancer'' data.}
	\end{subfigure}
	\begin{subfigure}[b]{0.45\textwidth}
		\includegraphics[width=\textwidth]{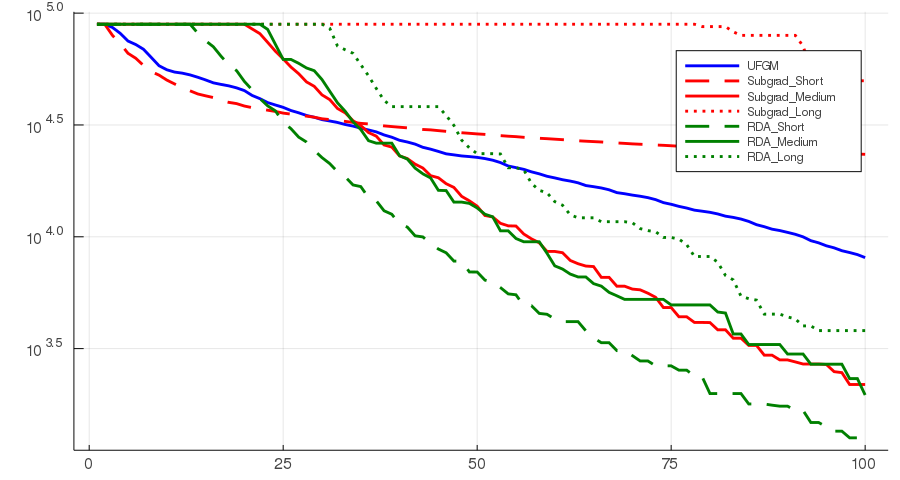}
		\caption{SpecProj on ``G18'' data.}
	\end{subfigure}
	\begin{subfigure}[b]{0.45\textwidth}
		\includegraphics[width=\textwidth]{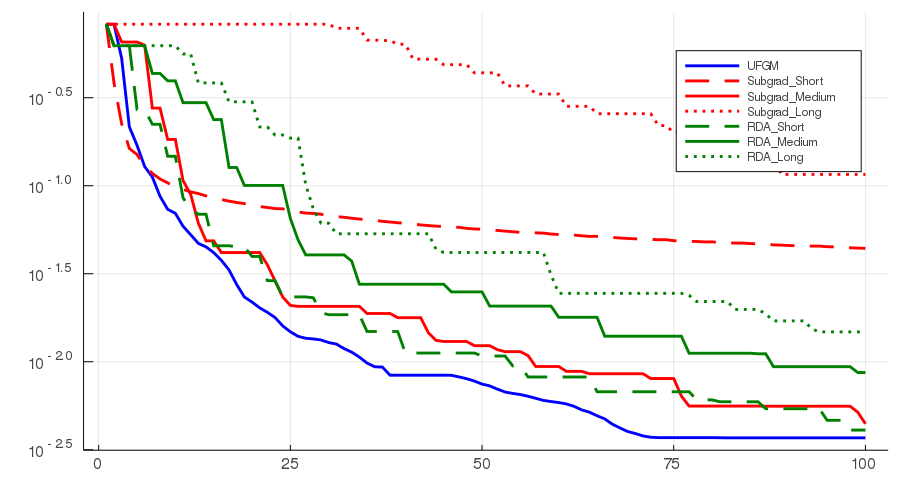}
		\caption{SVM on ``duke'' data.}
	\end{subfigure}
	\begin{subfigure}[b]{0.45\textwidth}
		\includegraphics[width=\textwidth]{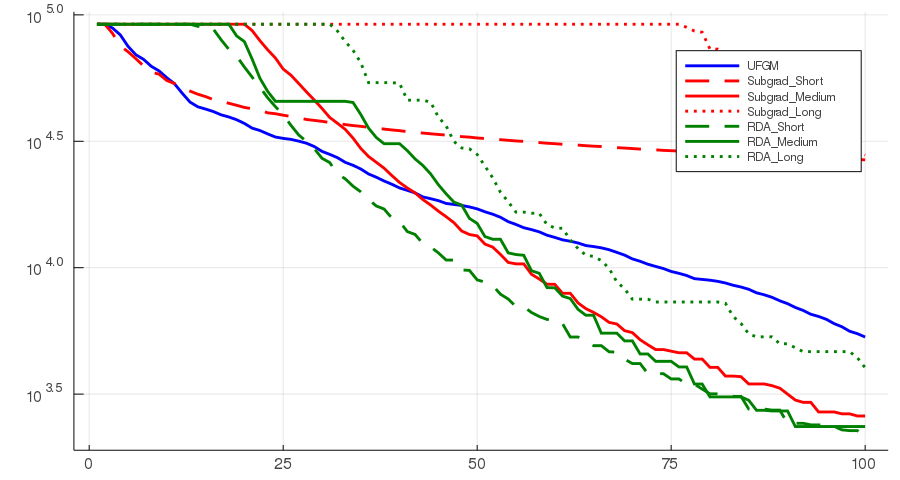}
		\caption{SpecProj on ``G19'' data.}
	\end{subfigure}
 	\begin{subfigure}[b]{0.45\textwidth}
		\includegraphics[width=\textwidth]{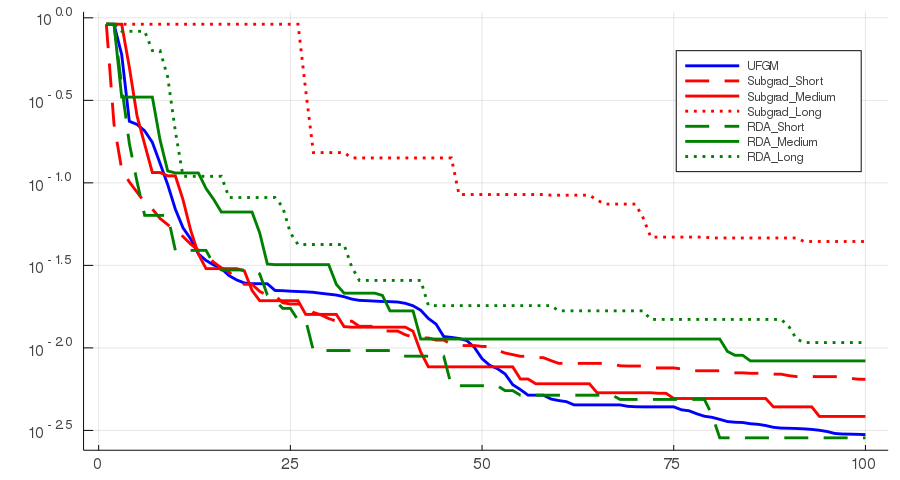}
		\caption{SVM on ``leu'' data.}
	\end{subfigure}
	\begin{subfigure}[b]{0.45\textwidth}
		\includegraphics[width=\textwidth]{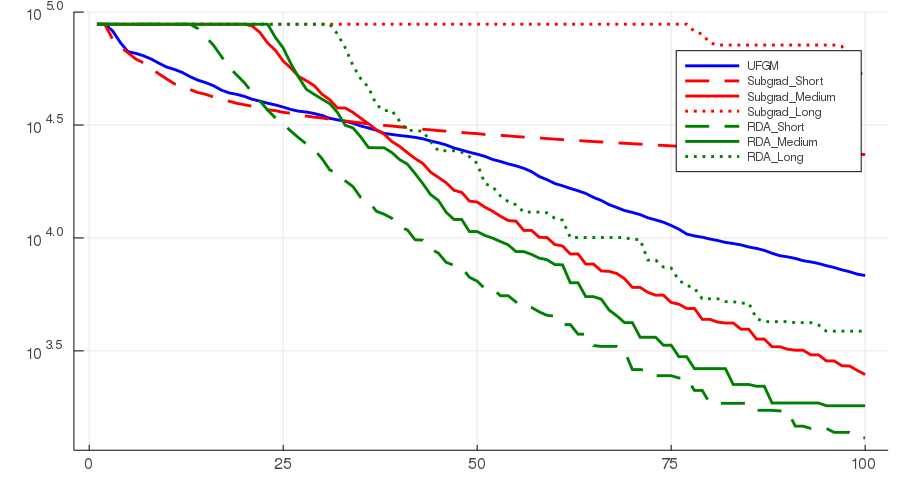}
		\caption{SpecProj on ``G20'' data.}
	\end{subfigure}
    \caption{{\color{blue} Iterations vs Best Objective Gap Seen for $\{$SVM,SpecProj$\}$ instances solved from $x_0=\{0,\bar y\}$ via three methods: $\mathtt{UFGM}$ with $L_0=1$ and $\epsilon=\{10^{-6},10^3\}$, the subgradient method $x_{k+1} = x_k -\alpha_k \nabla F(x_k)$ with short, medium, or long stepsizes $\alpha_k = 0.1/(k+1), 1/(k+1), 10/(k+1)$, and regularized dual averaging~\cite{Xiao2009} with short, medium, or long steps $\lambda_k = 1, k, k^2$ and fixed $\beta_k=\sqrt{k}$. Given both problem types are $1$-strongly convex, the medium steplength settings are theoretically supported by~\cite{GrimmerLi2023}.}}\label{fig:performance}
\end{figure}
}
    \section{Convergence Theory} \label{sec:theory}

Our analysis of $\mathtt{UFGM}$ closely follows the form of Nesterov's original analysis when considering the minimization of a single H\"older smooth function. The primary difference in deriving our more general convergence rates comes from how we estimate the coefficients $A_k$. A simple extension of Lemma~\ref{lem:nesterov-inexactness} is given below, showing subgradient evaluations of the sum $F(x) = \sum_{j\in \mathcal{J}}f_j(x)$ can be viewed as inexact gradient evaluations yielding quadratic upper bounds with constant depending on a combination of the H\"older-smoothness of each $f_j$.
\begin{lemma} \label{lem:inexactness}
	Suppose $F(x) = \sum_{j\in \mathcal{J}}f_j(x)$ with each $f_j$ being $(M_j,v_j)$-H\"older smooth. Then for any $\delta>0$ and
	$ L \geq \sum_{j\in\mathcal{J}} \left[\left[\frac{1-v_j}{1+v_j}\cdot\frac{|\mathcal{J}|}{\delta}\right]^{\frac{1-v_j}{1+v_j}}M_j^{\frac{2}{1+v_j}} \right], $
	\begin{equation*}
		F(y) \leq F(x)+\langle \nabla F(x), y-x\rangle + \frac{L}{2}\|y-x\|^2 + \frac{\delta}{2}, \quad \forall x,y\in Q \ .
	\end{equation*}
\end{lemma}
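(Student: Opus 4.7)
The plan is to reduce the lemma to a direct application of Nesterov's original single-function bound (Lemma~\ref{lem:nesterov-inexactness}) to each summand $f_j$, and then combine the resulting inequalities. The only design choice is how to distribute the error budget $\delta$ among the $|\mathcal{J}|$ summands, and the natural equal split $\delta_j := \delta/|\mathcal{J}|$ reproduces exactly the claimed form.

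In detail, fix $x,y \in Q$ and $\delta > 0$, and for each $j \in \mathcal{J}$ set $\delta_j = \delta/|\mathcal{J}|$. Applying Lemma~\ref{lem:nesterov-inexactness} to the $(M_j,v_j)$-H\"older smooth function $f_j$ at accuracy $\delta_j$, any
$$
L_j \;\geq\; \left[\frac{1-v_j}{1+v_j}\cdot\frac{1}{\delta_j}\right]^{\frac{1-v_j}{1+v_j}} M_j^{\frac{2}{1+v_j}}
    \;=\; \left[\frac{1-v_j}{1+v_j}\cdot\frac{|\mathcal{J}|}{\delta}\right]^{\frac{1-v_j}{1+v_j}} M_j^{\frac{2}{1+v_j}}
$$
satisfies
$$
f_j(y) \;\leq\; f_j(x) + \langle \nabla f_j(x), y-x\rangle + \tfrac{L_j}{2}\|y-x\|^2 + \tfrac{\delta_j}{2}.
$$
Summing these inequalities over $j \in \mathcal{J}$ and invoking the assumed sum rule $\sum_{j\in\mathcal{J}} \nabla f_j(x) = \nabla F(x)$ produces
$$
F(y) \;\leq\; F(x) + \langle \nabla F(x), y-x\rangle + \tfrac{1}{2}\Bigl(\textstyle\sum_{j} L_j\Bigr)\|y-x\|^2 + \tfrac{1}{2}\Bigl(\textstyle\sum_{j} \delta_j\Bigr),
$$
and the two aggregated remainders are exactly $\tfrac{L}{2}\|y-x\|^2 + \tfrac{\delta}{2}$ when each $L_j$ is taken at its smallest admissible value and $\sum_j \delta_j = \delta$. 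Any larger $L$ only makes the bound looser, so the hypothesized lower bound on $L$ suffices.

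There is essentially no obstacle here, as the argument is purely an aggregation. The only point worth remarking is that Nesterov's Lemma~\ref{lem:nesterov-inexactness} does not require convexity of the individual $f_j$: its proof goes through the integral remainder $f_j(y)-f_j(x)-\langle \nabla f_j(x),y-x\rangle$ combined with Young's inequality, relying only on the H\"older smoothness~\eqref{eq:holder-smooth}. Hence the summand-level application remains valid in the broader setting highlighted in the introduction, where the $f_j$'s need not be individually convex as long as the sum rule $\sum_j \nabla f_j = \nabla F$ holds. One could, in principle, sharpen the constants by optimizing $\delta_j$ over partitions of $\delta$ weighted by the exponents $v_j$, but this is not needed for the downstream convergence analysis and would not change the qualitative form of the bound.
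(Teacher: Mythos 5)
Your proof is correct and follows essentially the same route as the paper: split the error budget equally as $\delta_j = \delta/|\mathcal{J}|$, apply Nesterov's Lemma~\ref{lem:nesterov-inexactness} to each $(M_j,v_j)$-H\"older smooth summand $f_j$, and sum the resulting quadratic upper bounds using the subgradient sum rule $\nabla F = \sum_j \nabla f_j$. The additional remark about individual $f_j$'s not needing to be convex is also consistent with the paper's discussion.
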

\begin{proof}
    For each $(M_j,v_j)$-H\"older smooth $f_j$ and
	$ L_j \geq \left[\frac{1-v_j}{1+v_j}\cdot\frac{|\mathcal{J}|}{\delta}\right]^{\frac{1-v_j}{1+v_j}}M_j^{\frac{2}{1+v_j}}, $
	Lemma~\ref{lem:nesterov-inexactness} ensures
	\begin{equation*} 
	    f_j(y) \leq f_j(x)+\langle \nabla f_j(x), y-x\rangle + \frac{L_j}{2}\|y-x\|^2 + \frac{\delta}{2|\mathcal{J}|}, \quad \forall x,y\in Q \ .
	\end{equation*}
	Note that the sum rule allows us to decompose the $\nabla F(x) = \sum_{j\in\mathcal{J}}\nabla f_j(x) $ as a sum of subgradients. Then summing this over all $j$ gives the claim. 
\end{proof}
This lemma ensures the backtracking search in line 4 of Algorithm~\ref{alg:UFGM} always terminates and will further allow us to bound the rate $A_{k}$ grows.
Equip with this inexact oracle result, deriving convergence guarantees for Nesterov's universal fast gradient method follows nearly from the proof of Theorem 3 in~\cite{Nesterov2015UniversalGM}, complicated by a more difficult recurrence relation arising at the end of the argument. We show that the convergence of $\mathtt{UFGM}$ on sums of H\"older smooth functions is controlled by the following recurrence.
\begin{theorem}\label{thm:main-rate-full}
	For any convex $F(x) = \sum_{j\in \mathcal{J}}f_j(x)$ where each $f_j$ is $(M_j,v_j)$-H\"older smooth, all of the iterations of $\mathtt{UFGM}$ are well-defined. Moreover, for any $k\geq 0$,
	\begin{equation}\label{eq:our-bound}
	    A_k \left(F(y_k)+\Psi(y_k)-\frac{\epsilon}{2}\right) \leq \phi_k^* = \min_{x\in Q} \phi_k(x)
	\end{equation}
	where $A_k$ increases monotonically, satisfying the recurrence relation
	\begin{equation}\label{eq:our-recurrence}
	    \sum_{j\in\mathcal{J}}\left[\frac{2|\mathcal{J}|^{\frac{1-v_j}{1+v_j}}M_j^{\frac{2}{1+v_j}}}{\epsilon^{\frac{1-v_j}{1+v_j}}} \frac{(A_{k+1}-A_{k})^{\frac{1+3v_j}{1+v_j}}}{A_{k+1}^{\frac{2v_j}{1+v_j}}}\right] \geq 1\ .
	\end{equation}
\end{theorem}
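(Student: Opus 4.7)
My plan is to mirror Nesterov's original estimating-sequences argument from~\cite{Nesterov2015UniversalGM}, using Lemma~\ref{lem:inexactness} in place of Lemma~\ref{lem:nesterov-inexactness} at every step where the H\"older smoothness of a single function was invoked. The theorem asserts three things (well-definedness of the iterates, the estimating-sequence inequality~\eqref{eq:our-bound}, and the recurrence~\eqref{eq:our-recurrence}), and I would handle them in that order.

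First, well-definedness. For fixed $k$ the quantities $a_{k+1,i},A_{k+1,i},\tau_{k,i},x_{k+1,i},\hat x_{k+1,i},y_{k+1,i}$ depend only on the scalar $2^{i}L_k$, which grows without bound in $i$. Applied with $\delta=\epsilon\tau_{k,i}$, Lemma~\ref{lem:inexactness} gives a finite threshold $L^\star(\tau_{k,i})$ such that the inexact descent inequality~\eqref{eq:descent} automatically holds whenever $2^{i-1}L_k\geq L^\star(\tau_{k,i})$. Since $\tau_{k,i}$ is bounded away from $0$ along the search (it is non-increasing in $i$ only through $A_{k+1,i}$, which stays between $A_k$ and $A_k+a_{k+1,0}$), such an $i_k$ is reached in finitely many doublings, so the algorithm is well-defined.

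Second, the estimating-sequence bound~\eqref{eq:our-bound}. I would prove this by induction on $k$, exactly as in the proof of~\cite[Theorem~3]{Nesterov2015UniversalGM}. The base case $k=0$ is immediate because $A_0=0$. For the inductive step, use that $\phi_k$ is $1$-strongly convex (inherited from $d$) and minimized at $v_k$, together with the three-point identity for Bregman divergences, to bound $\phi_{k+1}^\star$ below by $\phi_k^\star+a_{k+1}(F(x_{k+1})+\langle\nabla F(x_{k+1}),\cdot\rangle+\Psi)$ evaluated at a convex combination of $v_k$ and a candidate point, and then replace the linearization with the inexact descent inequality~\eqref{eq:descent} and convexity of $F$. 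The nonsmooth slack $\tfrac{\epsilon\tau_{k,i_k}}{2}$ in~\eqref{eq:descent} telescopes, using $\sum_k a_{k+1}\tau_k = \sum_k a_{k+1}(a_{k+1}/A_{k+1}) \leq A_{k+1}$ via a standard computation, producing the $\epsilon/2$ slack in~\eqref{eq:our-bound}. Nothing in this step uses the summation structure — Lemma~\ref{lem:inexactness} plays exactly the same role as Lemma~\ref{lem:nesterov-inexactness} in the original argument.

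Third, the recurrence~\eqref{eq:our-recurrence}, which is where the summation structure enters. The backtracking choice of $i_k$ is minimal, so writing $L_{k+1}=2^{i_k-1}L_k$ and invoking Lemma~\ref{lem:inexactness} with $\delta=\epsilon\tau_k$ yields the upper bound
\[
L_{k+1} \;\leq\; 2\sum_{j\in\mathcal{J}}\left[\frac{1-v_j}{1+v_j}\cdot\frac{|\mathcal{J}|}{\epsilon\tau_k}\right]^{\frac{1-v_j}{1+v_j}}M_j^{\frac{2}{1+v_j}}.
\]
From the definition $a_{k+1}^2=A_{k+1}/L_{k+1}$ one has $L_{k+1}=A_{k+1}/(A_{k+1}-A_k)^2$ and $1/\tau_k=A_{k+1}/(A_{k+1}-A_k)$. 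Substituting these and rearranging (multiplying both sides by $(A_{k+1}-A_k)^2/A_{k+1}$, noting $(\tfrac{1-v_j}{1+v_j})^{(1-v_j)/(1+v_j)}\leq 1$, and simplifying the exponents $1-\tfrac{1-v_j}{1+v_j}=\tfrac{2v_j}{1+v_j}$ and $2-\tfrac{1-v_j}{1+v_j}=\tfrac{1+3v_j}{1+v_j}$) gives exactly~\eqref{eq:our-recurrence}. Monotonicity $A_{k+1}\geq A_k$ follows because $a_{k+1}>0$ by construction.

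The main obstacle I expect is bookkeeping the exponents cleanly in the final rearrangement, since each summand contributes a different power of $A_{k+1}$ and of $A_{k+1}-A_k$; the non-trivial downstream task (obtaining the explicit bound~\eqref{eq:our-rate}) is to extract a closed-form lower bound on $A_k$ from this implicit recurrence, but for the present theorem only the recurrence itself is needed.
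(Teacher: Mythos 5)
Your proposal follows the same three-part outline as the paper's own (very brief) proof—invoke Lemma~\ref{lem:inexactness} for well-definedness, cite Nesterov's estimating-sequence derivation for~\eqref{eq:our-bound}, then combine the backtracking bound with $a_{k+1}^2/A_{k+1}=1/(2^{i_k}L_k)$ to obtain the recurrence—and your exponent bookkeeping indeed produces exactly~\eqref{eq:our-recurrence}, so the approach and conclusion match the paper.

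Two small slips are worth correcting. First, in the well-definedness step you claim $\tau_{k,i}$ is ``bounded away from $0$ along the search'' because $A_{k+1,i}$ stays bounded. That reasoning is not right: $\tau_{k,i}=a_{k+1,i}/A_{k+1,i}$ also depends on $i$ through $a_{k+1,i}$, and for $k\geq 1$ (so $A_k>0$) one has $a_{k+1,i}\to 0$ hence $\tau_{k,i}\to 0$ as $i\to\infty$. The correct argument is a growth comparison: the Lemma~\ref{lem:inexactness} threshold scales like $\tau_{k,i}^{-(1-v_j)/(1+v_j)}$, which from $a_{k+1,i}=\Theta(2^{-i/2})$ grows like $2^{i(1-v_j)/(2(1+v_j))}$, a strictly slower exponential than the $2^i L_k$ that the backtracking tests, so the search terminates in finitely many doublings. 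Second, from $a_{k+1}^2=A_{k+1}/(2^{i_k}L_k)$ and $L_{k+1}=2^{i_k-1}L_k$, the quantity equal to $A_{k+1}/(A_{k+1}-A_k)^2$ is $2^{i_k}L_k$, not $L_{k+1}$; you have $L_{k+1}=A_{k+1}/\bigl(2(A_{k+1}-A_k)^2\bigr)$. Your stated upper bound with the factor $2$ in front is really the bound on $2^{i_k}L_k$, so the two factor-of-two slips cancel and the final recurrence is still correct, but the intermediate identification should be fixed for the chain of inequalities to be internally consistent.
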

\begin{proof}
	Our extended Lemma~\ref{lem:inexactness} establishes the iterates of $\mathtt{UFGM}$ are well-defined. Then the guarantee~\eqref{eq:our-bound} exactly follows its derivation in~\cite[Theorem 3]{Nesterov2015UniversalGM}. Furthermore, Lemma~\ref{lem:inexactness} implies that line~3 of the $\mathtt{UFGM}$ has
	$$ 2^{i_k}L_k \leq \sum_{j\in\mathcal{J}} \left[2\left(\frac{|\mathcal{J}|A_{k+1}}{\epsilon a_{k+1}}\right)^{\frac{1-v_j}{1+v_j}} M_j^{\frac{2}{1+v_j}}\right].$$
	Observing that $a_{k+1}^2/A_{k+1} = 1/2^{i_k}L_k$ and $a_{k+1}=A_{k+1}-A_{k}$ yields~\eqref{eq:our-recurrence}.
\end{proof}

From this theorem, the primary difficulty in bounding the convergence of the fast universal method applied to a sum (and thus the primary difficulty in proving Theorems~\ref{thm:main-rate} and~\ref{thm:main-rate-implicit}) is then in solving this recurrence relation. To do so, we prove two bounds on any sequence satisfying the recurrence~\eqref{eq:our-recurrence} in the following two subsections. The first bound proven in Lemma~\ref{lem:recurrence} gives an explicit bound on $k$ in terms of $A_k$, which when combined with~\eqref{eq:our-bound} gives the explicit bound on the accuracy of $\mathtt{UFGM}$ of Theorem~\ref{thm:main-rate}. Our second bound proven in Lemma~\ref{lem:recurrence-implicit} gives an implicit bound for $A_k$ based on the solution of a related nonlinear equation. In turn, this yields the improved (although implicit) guarantee of Theorem~\ref{thm:main-rate-implicit}.
	
\subsection{Proof of Explicit Convergence Guarantee (Theorem~\ref{thm:main-rate})} \label{subsec:explicit}
	From Theorem~\ref{thm:main-rate-full}, every $y_k$ is an $\epsilon$-minimizer of $F$ once $A_k \geq 2\xi(x_0,x^*)/\epsilon$. To ensure $A_k$ reaches this needed size, below we show any recurrence satisfying~\eqref{eq:our-recurrence} has $k$ lower bound a certain summation of powers of $A_k$.
	\begin{lemma}\label{lem:recurrence}
		Suppose a nonnegative, increasing sequence $A_k$ satisfies
		$$ \sum_{j\in\mathcal{J}} \left[\alpha_j \frac{(A_{k+1}-A_k)^{1+q_j}}{A_{k+1}^{q_j}}\right] \geq 1$$
		where $\alpha_j>0$ and $q_j\in [0,1]$ are generic constants for each $j\in\mathcal{J}$. Then for any $k\geq 0$,
		$$ k \leq \sum_{j\in\mathcal{J}} \left[(1+q_j)(\alpha_jA_k)^{\frac{1}{1+q_j}}\right] \ .$$
	\end{lemma}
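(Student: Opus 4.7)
The plan is to introduce the potential function
$$\Phi(A) := \sum_{j\in\mathcal{J}} (1+q_j)(\alpha_j A)^{\frac{1}{1+q_j}}$$
so the desired bound is simply $k \leq \Phi(A_k)$. I will prove this by induction on $k$, with base case $k=0$ holding since $\Phi \geq 0$ on $[0,\infty)$, and the induction step reducing to showing that the given recurrence forces a per-step increment $\Phi(A_{k+1})-\Phi(A_k) \geq 1$.

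To lower-bound the increment, I will use that each $A\mapsto A^{1/(1+q_j)}$ is concave (since $1/(1+q_j)\in[1/2,1]$), so by the subgradient inequality applied at $A_{k+1}$,
$$A_{k+1}^{\frac{1}{1+q_j}} - A_k^{\frac{1}{1+q_j}} \geq \frac{1}{1+q_j}\cdot\frac{A_{k+1}-A_k}{A_{k+1}^{\frac{q_j}{1+q_j}}}.$$
Multiplying by $(1+q_j)\alpha_j^{1/(1+q_j)}$ and summing over $j$, it will suffice to show
$$\sum_{j\in\mathcal{J}} \alpha_j^{\frac{1}{1+q_j}}\frac{A_{k+1}-A_k}{A_{k+1}^{\frac{q_j}{1+q_j}}} \;\geq\; 1.$$
Setting $x_j := \alpha_j^{1/(1+q_j)}(A_{k+1}-A_k)/A_{k+1}^{q_j/(1+q_j)}$, the hypothesis of the lemma is exactly $\sum_j x_j^{1+q_j} \geq 1$, and the inequality to prove becomes $\sum_j x_j \geq 1$.

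The core elementary step, and really the only nontrivial point in the argument, is the implication
$$\Bigl(\sum_j x_j^{1+q_j}\geq 1,\; x_j\geq 0,\; q_j\in[0,1]\Bigr) \;\Longrightarrow\; \sum_j x_j \geq 1.$$
I will handle this by a short case split: if every $x_j\leq 1$, then $1+q_j\geq 1$ gives $x_j^{1+q_j}\leq x_j$ termwise, hence $\sum_j x_j \geq \sum_j x_j^{1+q_j}\geq 1$; otherwise some $x_{j_0}>1$ and $\sum_j x_j \geq x_{j_0}>1$. Either way $\sum_j x_j \geq 1$, closing the induction and proving $k\leq \Phi(A_k)$.
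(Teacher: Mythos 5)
Your proof is correct and follows essentially the same route as the paper's: induction on $k$, concavity of $z\mapsto z^{1/(1+q_j)}$ to lower-bound the per-step increment, and the same case split on whether any $x_j$ exceeds $1$. You package it a bit more cleanly by first applying concavity uniformly to all $j$ and then isolating the scalar fact ``$\sum_j x_j^{1+q_j}\geq 1 \Rightarrow \sum_j x_j\geq 1$,'' whereas the paper weaves the concavity and case analysis together inside the induction step, but the underlying argument is the same.
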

    \begin{proof}
        Trivially this holds for $k=0$. Inductively suppose the claimed lower bound holds for some $k$.
    	First suppose some single summand $j'\in\mathcal{J}$ has $\alpha_{j'} \frac{(A_{k+1}-A_k)^{1+q_{j'}}}{A_{k+1}^{q_{j'}}} \geq 1$. Then combining this bound with the inductive hypothesis gives our inductive step at $k+1$ as
    	\begin{align*}
    	    k +1
    	    &\leq \sum_{j\in\mathcal{J}} \left[(1+q_j)(\alpha_jA_k)^{\frac{1}{1+q_j}}\right] + \left(\alpha_{j'} \frac{(A_{k+1}-A_k)^{1+q_{j'}}}{A_{k+1}^{q_{j'}}}\right)^{\frac{1}{1+q_{j'}}}\\
    	    &\leq \sum_{j\in\mathcal{J}\setminus\{j'\}} \left[(1+q_j)(\alpha_jA_{k+1})^{\frac{1}{1+q_j}}\right] + (1+q_{j'})(\alpha_{j'}A_k)^{\frac{1}{1+q_{j'}}}+ \alpha_{j'}^{\frac{1}{1+q_{j'}}} \frac{A_{k+1}-A_k}{A_{k+1}^{\frac{q_{j'}}{1+q_{j'}}}}\\
    	    &\leq \sum_{j\in\mathcal{J}\setminus\{j'\}} \left[(1+q_j)(\alpha_jA_{k+1})^{\frac{1}{1+q_j}}\right] + (1+q_{j'})(\alpha_{j'}A_{k+1})^{\frac{1}{1+q_{j'}}}
    	\end{align*}
    	where the first inequality uses the assumption on $j'$ (or rather the $(1+q_{j'})$th root of it), the second uses the monotonicity of $A_k$ on each $j\neq j'$ term, the third uses the concavity of $z^{1/(1+q_{j'})}$ between $A_k$ and $A_{k+1}$.
    	
    	Now suppose instead that every $j\in\mathcal{J}$ has $\alpha_j \frac{(A_{k+1}-A_k)^{1+q_j}}{A_{k+1}^{q_j}} <1$. Then combining our inductive hypothesis with the given recurrence relation  gives our inductive step at $k+1$ as
    	\begin{align*}
    	    k +1 &\leq \sum_{j\in\mathcal{J}} \left[(1+q_j)(\alpha_jA_k)^{\frac{1}{1+q_j}} + \alpha_j \frac{(A_{k+1}-A_k)^{1+q_j}}{A_{k+1}^{q_j}}\right] \\
    	    &\leq \sum_{j\in\mathcal{J}} \left[(1+q_j)(\alpha_jA_k)^{\frac{1}{1+q_j}} + \alpha_{j}^{\frac{1}{1+q_{j}}} \frac{A_{k+1}-A_k}{A_{k+1}^{\frac{q_{j}}{1+q_{j}}}}\right] \\
    	    &\leq \sum_{j\in\mathcal{J}} \left[(1+q_j)(\alpha_jA_{k+1})^{\frac{1}{1+q_j}}\right]
    	\end{align*}
    	where the second inequality uses our assumed bound on every $j$ (and that $z^{1/(1+q_{j})} \geq z$ for all $j\geq0$ and $z\in[0,1]$) and the third uses the concavity of $z^{1/(1+q_{j})}$ between $A_k$ and $A_{k+1}$.
    \end{proof}
	Lemma~\ref{lem:recurrence} suffices to complete our proof of Theorem~\ref{thm:main-rate} as it follows that
    $$ k \leq \sum_{j\in\mathcal{J}}\left[\frac{1+3v_j}{1+v_j}\left(\frac{2|\mathcal{J}|^{\frac{1-v_j}{1+v_j}}M_j^{\frac{2}{1+v_j}}}{\epsilon^{\frac{1-v_j}{1+v_j}}}A_{k}\right)^{\frac{1+v_j}{1+3v_j}} \right]\ . $$ 
    Plugging in $A_k\geq 2\xi(x_0,x^*)/\epsilon$ then gives the result.

\subsection{Proof of Improved Implicit Convergence Guarantee (Theorem~\ref{thm:main-rate-implicit})} \label{subsec:implicit}
Here we improve on the convergence guarantee of Theorem~\ref{thm:main-rate} by providing a tighter analysis of the recurrence relation~\eqref{eq:our-recurrence} than Lemma~\ref{lem:recurrence} provides in the following Lemma~\ref{lem:recurrence-implicit}. By applying this lemma in the place of Lemma~\ref{lem:recurrence}, Theorem~\ref{thm:main-rate-implicit} immediately follows.

\begin{lemma}\label{lem:recurrence-implicit}
	Suppose a nonnegative, increasing sequence $A_k$ satisfies
	$$ \sum_{j\in\mathcal{J}} \left[\alpha_j \frac{(A_{k+1}-A_k)^{1+q_j}}{A_{k+1}^{q_j}}\right] \geq 1$$
	where $\alpha_j>0$ and $q_j\in [0,1]$ are generic constants for each $j\in\mathcal{J}$. Then for any $\Delta>0$, $A_k \geq \Delta$ for all $k \geq 5C$ where $C$ is the unique positive root of the equation
	$$ \sum_{j\in \mathcal{J}} \alpha_j\Delta C^{-(1+q_j)} -1 = 0 \ . $$
\end{lemma}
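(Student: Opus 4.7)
The plan is to extract from the recurrence a clean per-step lower bound on $A_{k+1}-A_k$ by comparing it against an auxiliary scaling function $C(A)$, and then to telescope after a $\sqrt{\cdot}$ substitution. I first define $C(A) > 0$ as the unique positive root of $\sum_{j\in \mathcal{J}} \alpha_j A/C(A)^{1+q_j} = 1$, so that $C(\Delta) = C$. Plugging the candidate value $d = A_{k+1}/C(A_{k+1})$ into the recurrence's LHS gives exactly $1$ by construction of $C(A_{k+1})$. Since that LHS is strictly increasing in $d$, the hypothesis $\geq 1$ immediately forces the per-step estimate $A_{k+1} - A_k \geq A_{k+1}/C(A_{k+1})$.

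The crux of the argument is then the scaling comparison $C(A) \leq C\sqrt{A/\Delta}$ for all $A \leq \Delta$. To prove this I substitute $C' := C\sqrt{A/\Delta}$ into $\sum_j \alpha_j A/(C')^{1+q_j}$ and recognize each summand as $x_j (A/\Delta)^{(1-q_j)/2}$, where $x_j := \alpha_j \Delta/C^{1+q_j}$ so that $\sum_j x_j = 1$ by the definition of $C$. Since $A \leq \Delta$ and $q_j \in [0,1]$ makes the exponent $(1-q_j)/2$ lie in $[0, 1/2]$, each factor $(A/\Delta)^{(1-q_j)/2}$ is at most $1$, so the whole sum is $\leq 1$. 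Monotonicity of the LHS in $C'$ (it is decreasing) then gives $C(A) \leq C'$.

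Combining the two bounds: while $A_{k+1} \leq \Delta$, one has $A_{k+1} - A_k \geq \sqrt{A_{k+1}\Delta}/C$. The substitution $u_k := \sqrt{A_k}$ rewrites this as $(u_{k+1}+u_k)(u_{k+1}-u_k) \geq u_{k+1}\sqrt{\Delta}/C$, and monotonicity of $A_k$ (hence $u_{k+1}+u_k \leq 2u_{k+1}$) yields the constant per-step increment $u_{k+1} - u_k \geq \sqrt{\Delta}/(2C)$. Telescoping from $u_0 \geq 0$, if $A_K < \Delta$ then $\sqrt{\Delta} > u_K \geq K\sqrt{\Delta}/(2C)$, forcing $K < 2C$. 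In particular, the stated bound $k \geq 5C$ suffices (with considerable slack) to conclude $A_k \geq \Delta$.

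The main obstacle is spotting the right scaling bound $C(A) \leq C\sqrt{A/\Delta}$; the exponent $\tfrac{1}{2}$ is exactly what prevents the telescoping sum from diverging near $A = 0$ (the naive estimate $d_k \geq A_{k+1}/C$ alone yields a logarithmically divergent bound), and it is precisely the hypothesis $q_j \leq 1$ that keeps the relevant exponent $(1-q_j)/2$ nonnegative so the bookkeeping goes through.
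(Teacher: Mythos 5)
Your proposal is correct and, in fact, proves a strictly sharper result than the paper: you obtain $A_k \geq \Delta$ for all $k \geq 2C$, whereas the paper proves it only for $k \geq 5C$.

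Both arguments ultimately rest on the same structural property of the scaling function $C(\cdot)$ --- your inequality $C(A) \leq C\sqrt{A/\Delta}$ is precisely the paper's $C(\lambda\delta) \leq \sqrt{\lambda}\,C(\delta)$ with $\lambda = A/\Delta$, and both of you derive it by noting that replacing $C$ with $C\sqrt{\lambda}$ turns the defining sum into $\sum_j x_j \lambda^{(1-q_j)/2} \leq 1$ since $q_j \leq 1$. Where the proofs genuinely diverge is in how they turn this local estimate into a global iteration count. The paper partitions the range of $A_k$ into dyadic blocks $[2^n A_1, 2^{n+1}A_1]$, shows that $A_k$ grows geometrically within each block (giving at most $\log(2)\,C(2^{n+1}A_1)$ steps per block via the elementary bound $x \geq 1/\log(x/(x-1))$), and then sums a geometric series to get the coefficient $2\sqrt{2}\log(2)/(\sqrt{2}-1) \approx 4.73$, rounded to $5$. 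You instead make the substitution $u_k = \sqrt{A_k}$, under which the per-step bound $A_{k+1}-A_k \geq \sqrt{A_{k+1}\Delta}/C$ factors as $(u_{k+1}-u_k)(u_{k+1}+u_k) \geq u_{k+1}\sqrt{\Delta}/C$ and, using $u_{k+1}+u_k \leq 2u_{k+1}$, collapses to a constant increment $u_{k+1}-u_k \geq \sqrt{\Delta}/(2C)$. A direct telescoping then gives $K < 2C$ whenever $A_K < \Delta$, with no logarithms, no block decomposition, and no geometric-series bookkeeping. This is cleaner and tighter, and the diagnostic comment you make --- that the exponent $1/2$ is exactly what prevents a logarithmic factor from accumulating near small $A$ --- is the same phenomenon the paper's dyadic argument is implicitly managing, just made explicit by the change of variables.
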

\begin{proof}
    First, observe that since $A_0\geq 0$, it follows that $A_1 \geq 1/\sum \alpha_j$. Hence without loss of generality, we can assume $\Delta > 1/\sum \alpha_j$ (as the result is immediate otherwise since $A_k\geq A_1>\Delta$).
    
    For any $\delta \geq 1/\sum \alpha_j$, let $C(\delta)>1$ denote the unique positive root of
    $$\sum_{j\in \mathcal{J}} \alpha_j \delta C^{-(1+q_j)} - 1 = 0 \ .$$
    Uniqueness of the positive solution $C(\delta)$ follows from the fact that the function $C\mapsto \sum \alpha_j \delta C^{-1-q_j}-1$ is strictly decreasing for $C>0$ and approaches $-1$ as $C\rightarrow \infty$. Existence of a solution $C(\delta)>1$ follows as this function equals $\sum \alpha_j \delta -1 > 0$ at $C=1$. As a final useful property of $C(\delta)$, observe that for any $\lambda\leq 1$,
    \begin{equation}\label{eq:C-property}
        \lambda C(\delta) \leq C(\lambda \delta) \leq \sqrt{\lambda}C(\delta) \ .
    \end{equation}
    
    Let $N:=\lceil\log_2(\Delta/A_1)\rceil$, which ensures $2^{N-1} A_1\leq \Delta \leq 2^N A_1$. Then we prove the lemma by showing that for any $n \in \{0, \dots, N-1\}$, at most
    \begin{equation} \label{eq:interval-bound}
        \log(2)C(2^{n+1}A_1)
    \end{equation}
    many different values of $k$ have $A_k$ in the interval $[2^n, 2^{n+1}]/\sum\alpha_j$. Summing this up from $n=0$ to $N-1$ gives the claim the number of steps before $A_k \geq 2^NA_1 \geq \Delta$ is at most
    \begin{align*}
        \sum_{n=0}^{N-1} \log(2)C(2^{n+1}A_1) &\leq \sum_{n=0}^{N-1} \log(2)C(2^{N}A_1)\sqrt{2}^{n+1-N}\\
        &\leq  \frac{\sqrt{2}\log(2)C(2^NA_1)}{(\sqrt{2}-1)} < 5C(\Delta)
    \end{align*}
    where the first inequality uses~\eqref{eq:C-property}, the second upper bounds this sum by a geometric series, and the third again uses~\eqref{eq:C-property} (and then rounds the coefficient $2\sqrt{2}\log(2)/(\sqrt{2}-1)$ up to $5$ for simplicity).
    
    Now we complete the proof by showing the claimed bound~\eqref{eq:interval-bound} for any $n\geq 1$. Note for any $A_{k+1} \leq 2^{n+1}A_1$, the given recurrence relation implies that
	$$ \sum_{j\in\mathcal{J}} \left[\alpha_j2^{n+1}A_1\left(\frac{A_{k+1}-A_k}{A_{k+1}}\right)^{1+q_j}\right] \geq 1 \ .$$
	Hence $(A_{k+1}-A_k)/A_{k+1}$ must be at least $C(2^{n+1}A_1)^{-1}$, or equivalently,
	$$ A_{k+1} \geq \frac{C(2^{n+1}A_1)}{C(2^{n+1}A_1)-1}A_k \ . $$
	Thus the value of $A_{k}$ grows geometrically within the interval $[2^n, 2^{n+1}]/\sum\alpha_j$. As a result, the number of different $k$ with $A_k$ in this interval is at most
	\begin{align*}
	    \log_{C(2^{n+1}A_1)/(C(2^{n+1}A_1)-1)}(2) &= \frac{\log(2)}{\log(C(2^{n+1}A_1)/(C(2^{n+1}A_1)-1))}\\
	    &\leq \log(2)C(2^{n+1}A_1))\ .
	\end{align*}
	where the inequality utilizes that $x \geq 1/\log(x/(x-1))$ for all $x\geq 1$.
\end{proof}

\subsection{Improved Theory Under H\"older Growth Bounds}\label{subsec:strong-convexity}
For many first-order methods in a range of different settings, faster convergence guarantees are well-known under strong convexity of $F+\Psi$ or more generally under some growth/error bound or KL condition. The most common such setting is $\mu$-strongly convex optimization, which possesses $(\mu,2)$-H\"older growth. Recall for $(M,v)$-H\"older smooth optimization satisfying $(\mu,p)$-H\"older growth~\eqref{eq:holder-growth}, the optimal\footnote{The original source for such a lower bound is difficult to identify in the literature, leaving this optimality somewhat as folklore. The theory of~\cite{Grimmer2021rateLifting} allows these bounds to be concluded from the classic lower bounds without growth.} rate is
$O\left(\frac{M^{\frac{2}{1+3v}}}{\mu^{\frac{2(1+v)}{p(1+3v)}}\epsilon^{\frac{2(p-1-v)}{p(1+3v)}}}\right)$
if $v<p-1$ and $O((M/\mu)^{\frac{2}{1+3v}}\log(1/\epsilon))$ if $v=p-1$.

Our Theorem~\ref{thm:holder-growth-rate} generalizes this fact to sums for the following restarted variant dubbed $\mathtt{R\mbox{-}UFGM}$, in Algorithm~\ref{alg:RUFGM}. We will assume that the optimal value of~\eqref{eq:our-problem} is known for ease of development. The works~\cite{Roulet2020,RenegarGrimmer2018} have provided more sophisticated restarting approaches, avoiding such assumptions, often at the cost of a logarithmic term in the oracle complexity. These schemes may give a wholly parameter-free approach but is beyond the scope of this work.
\begin{algorithm}
    \caption{Restarted Universal Fast Gradient Method ($\mathtt{R\mbox{-}UFGM}$)}\label{alg:RUFGM}
    \begin{algorithmic}[1]
        \STATE \textbf{Initialization}: Choose $z_0\in Q$, $L_0>0$. Define $\epsilon_0 = (F(z_0)+\Psi(z_0)-p_*)/2$.
        \FOR{$n = 0,1,2,\dots$}
        \STATE Let $y_k^{(n)}$ denote the iterates of $\mathtt{UFGM}(z_n, \epsilon_n, L_0)$ run until $y^{(n)}_{k_n}$ is an $\epsilon_n$-minimizer.
        \STATE Set $z_{n+1} = y_{k_n,n}$ and $\epsilon_{n+1}=\epsilon_n/2$.
        \ENDFOR
    \end{algorithmic}
\end{algorithm}

Theorem~\ref{thm:holder-growth-rate} has the convergence rate with respect to each summand with $v_j+1 \leq p$ improve to the optimal convergence rate~\eqref{eq:optimal-growth-rate} for $(M_j,v_j)$-H\"older smooth, $(\mu, p)$-H\"older growth optimization. Each summand in our convergence rate with $v_j +1 > p$ decreases superlinearly. These rapidly decaying terms cannot be compared to an isolated setting as no function can have $(M,v)$-H\"older smoothness and $(\mu,p)$-H\"older growth with $v+1 > p$.

\subsubsection{Proof of Improved H\"older Growth Convergence Guarantee (Theorem~\ref{thm:holder-growth-rate})}
    Observe that the stopping criteria for each run of $\mathtt{UFGM}$ ensures that each initialization $z_n$ has
    $F(z_n)+\Psi(z_n)-p_* \leq 2^{-n}(F(z_0)+\Psi(z_0)-p_*) = 2\epsilon_n.$
    It follows that after $N$ restarts, an $\tilde\epsilon$-minimizer has been found. First, we bound the number of iterations needed to reach the stopping criteria for each iteration $n$ of $\mathtt{R\mbox{-}UFGM}$.
    Namely, Theorem~\ref{thm:main-rate} and H\"older growth imply
    \begin{align*}
        k_n -1 & \leq \sum_{j\in\mathcal{J}}\left[\left(\frac{1+3v_j}{1+v_j}2^{\frac{1+v_j}{1+3v_j}}|\mathcal{J}|^{\frac{1-v_j}{1+3v_j}}\right)\left(\frac{M_j}{\epsilon_n}\right)^{\frac{2}{1+3v_j}}\xi(z_n,x^*)^{\frac{1+v_j}{1+3v_j}} \right]\\
        &\leq \sum_{j\in\mathcal{J}}\left[\left(\frac{1+3v_j}{1+v_j}2^{\frac{1+v_j}{1+3v_j}}|\mathcal{J}|^{\frac{1-v_j}{1+3v_j}}\right)\left(\frac{M_j}{\epsilon_n}\right)^{\frac{2}{1+3v_j}}\left(\frac{\epsilon_n}{\mu}\right)^{\frac{2(1+v_j)}{p(1+3v_j)}} \right]\\
        & = \sum_{j\in\mathcal{J}}\left[\left(\frac{1+3v_j}{1+v_j}2^{\frac{1+v_j}{1+3v_j}}|\mathcal{J}|^{\frac{1-v_j}{1+3v_j}}\right)\frac{M_j^{\frac{2}{1+3v_j}}}{\mu^{\frac{2(1+v_j)}{p(1+3v_j)}}\epsilon_n^{\frac{2(p-1-v_j)}{p(1+3v_j)}}}\right] \ .
    \end{align*}
    Then bounding $\epsilon_n \geq 2^{N-n-1}\tilde\epsilon$, the total iterations used by $\mathtt{R\mbox{-}UFGM}$ is
    \begin{align}
        \sum_{n=0}^Nk_n &\leq \sum_{n=0}^N\sum_{j\in\mathcal{J}}\left[\left(\frac{1+3v_j}{1+v_j}2^{\frac{1+v_j}{1+3v_j}}|\mathcal{J}|^{\frac{1-v_j}{1+3v_j}}\right)\frac{M_j^{\frac{2}{1+3v_j}}}{\mu^{\frac{2(1+v_j)}{p(1+3v_j)}}(2^{N-n-1}\tilde\epsilon)^{\frac{2(p-1-v_j)}{p(1+3v_j)}}}\right] +N\nonumber\\
        &=\sum_{j\in\mathcal{J}}\left[c_j'''\frac{M_j^{\frac{2}{1+3v_j}}}{\mu^{\frac{2(1+v_j)}{p(1+3v_j)}}\tilde \epsilon^{\frac{2(p-1-v_j)}{p(1+3v_j)}}}\sum_{n=0}^N\frac{1}{(2^{N-n})^{\frac{2(p-1-v_j)}{p(1+3v_j)}}}\right] +N\nonumber\\
        &\leq\sum_{j\in\mathcal{J}}\left[c_j'''\frac{M_j^{\frac{2}{1+3v_j}}}{\mu^{\frac{2(1+v_j)}{p(1+3v_j)}}\tilde \epsilon^{\frac{2(p-1-v_j)}{p(1+3v_j)}}}\min\left\{\sum_{r=0}^\infty\left(\frac{1}{2^{\frac{2(p-1-v_j)}{p(1+3v_j)}}}\right)^r, \frac{N}{2^{\frac{2(p-1-v_j)}{p(1+3v_j)}}}\right\}\right] +N \ .\nonumber
    \end{align}
    Bounding this geometric sum gives the claimed result.

    {\small
    \bibliographystyle{unsrt}
    \bibliography{bibliography}

\begin{thebibliography}{10}

\bibitem{Nesterov2015UniversalGM}
Yurii Nesterov.
\newblock Universal gradient methods for convex optimization problems.
\newblock {\em Mathematical Programming}, 152:381--404, 2015.

\bibitem{Nemirovskii1986}
A~S Nemirovskii and Y~E Nesterov.
\newblock Optimal methods of smooth convex minimization.
\newblock {\em USSR Comput. Math. Math. Phys.}, 25(3–4):21–30, jul 1986.

\bibitem{Lan2015}
Guanghui Lan.
\newblock Bundle-level type methods uniformly optimal for smooth and nonsmooth
  convex optimization.
\newblock {\em Math. Program.}, 149(1–2):1–45, feb 2015.

\bibitem{Bolte2017}
J{\'e}r{\^o}me Bolte, Trong~Phong Nguyen, Juan Peypouquet, and Bruce~W. Suter.
\newblock From error bounds to the complexity of first-order descent methods
  for convex functions.
\newblock {\em Mathematical Programming}, 165(2):471--507, Oct 2017.

\bibitem{Yang2018}
Tianbao Yang and Qihang Lin.
\newblock Rsg: Beating subgradient method without smoothness and strong
  convexity.
\newblock {\em Journal of Machine Learning Research}, 19(6):1--33, 2018.

\bibitem{Roulet2020}
Vincent Roulet and Alexandre d'Aspremont.
\newblock Sharpness, restart, and acceleration.
\newblock {\em SIAM Journal on Optimization}, 30(1):262--289, 2020.

\bibitem{RenegarGrimmer2018}
James Renegar and Benjamin Grimmer.
\newblock A simple nearly optimal restart scheme for speeding up first-order
  methods.
\newblock {\em Found. Comput. Math.}, 22(1):211--256, 2022.

\bibitem{Kurdyka1998}
Krzysztof Kurdyka.
\newblock On gradients of functions definable in o-minimal structures.
\newblock {\em Annales de l'institut Fourier}, 48(3):769--783, 1998.

\bibitem{Lojasiewicz1963}
Stanislaw Lojasiewicz.
\newblock Une propri{\'e}t{\'e} topologique des sous-ensembles analytiques
  r{\'e}els.
\newblock {\em Les {\'e}quations aux d{\'e}riv{\'e}es partielles}, 117:87--89,
  1963.

\bibitem{Lojasiewicz1993}
Stanislas {\L}ojasiewicz.
\newblock Sur la g{\'e}om{\'e}trie semi-et sous-analytique.
\newblock In {\em Annales de l'institut Fourier}, volume~43, pages 1575--1595,
  1993.

\bibitem{Bolte2007}
J{\'e}r{\^o}me Bolte, Aris Daniilidis, and Adrian Lewis.
\newblock The {\l}ojasiewicz inequality for nonsmooth subanalytic functions
  with applications to subgradient dynamical systems.
\newblock {\em SIAM Journal on Optimization}, 17(4):1205--1223, 2007.

\bibitem{DiazGrimmer2021}
Mateo D\'{\i}az and Benjamin Grimmer.
\newblock Optimal convergence rates for the proximal bundle method.
\newblock {\em SIAM Journal on Optimization}, 33(2):424--454, 2023.

\bibitem{Anthonakopoulos2021}
Kimon Antonakopoulos, Thomas Pethick, Ali Kavis, Panayotis Mertikopoulos, and
  Volkan Cevher.
\newblock Sifting through the noise: Universal first-order methods for
  stochastic variational inequalities.
\newblock In {\em Advances in Neural Information Processing Systems},
  volume~34, pages 13099--13111. Curran Associates, Inc., 2021.

\bibitem{Doikov2022}
Nikita Doikov, Konstantin Mishchenko, and Yurii Nesterov.
\newblock Super-universal regularized newton method, 2022.
\newblock https://arxiv.org/abs/2208.05888.

\bibitem{NocedalWright-textbook}
Jorge Nocedal and Stephen~J. Wright.
\newblock {\em Numerical Optimization}.
\newblock Springer, New York, NY, USA, 2e edition, 2006.

\bibitem{Arjevani2020}
Yossi Arjevani, Amit Daniely, Stefanie Jegelka, and Hongzhou Lin.
\newblock On the complexity of minimizing convex finite sums without using the
  indices of the individual functions, 2020.
\newblock https://arxiv.org/abs/2002.03273.

\bibitem{Wang2022}
Nuozhou Wang and Shuzhong Zhang.
\newblock A gradient complexity analysis for minimizing the sum of strongly
  convex functions with varying condition numbers, 2022.
\newblock https://arxiv.org/abs/2208.06524.

\bibitem{Xiao2009}
Lin Xiao.
\newblock Dual averaging method for regularized stochastic learning and online
  optimization.
\newblock In {\em Advances in Neural Information Processing Systems},
  volume~22. Curran Associates, Inc., 2009.

\bibitem{Chen2012}
Xi~Chen, Qihang Lin, and Javier Pena.
\newblock Optimal regularized dual averaging methods for stochastic
  optimization.
\newblock In {\em Advances in Neural Information Processing Systems},
  volume~25. Curran Associates, Inc., 2012.

\bibitem{Ghadimi2012}
Saeed Ghadimi and Guanghui Lan.
\newblock Optimal stochastic approximation algorithms for strongly convex
  stochastic composite optimization i: A generic algorithmic framework.
\newblock {\em SIAM Journal on Optimization}, 22(4):1469--1492, 2012.

\bibitem{Grimmer2019}
Benjamin Grimmer.
\newblock Convergence rates for deterministic and stochastic subgradient
  methods without lipschitz continuity.
\newblock {\em SIAM Journal on Optimization}, 29(2):1350--1365, 2019.

\bibitem{nyquist1980recent}
Hans Nyquist.
\newblock {\em Recent studies on Lp-norm estimation}.
\newblock PhD thesis, Ume{\aa} universitet, 1980.

\bibitem{Money1982}
Arthur Money, John~F. Affleck-Graves, M.~L. Hart, and G.~D.~I. Barr.
\newblock The linear regression model: Lp norm estimation and the choice of p.
\newblock {\em Communications in Statistics - Simulation and Computation},
  11:89--109, 1982.

\bibitem{Llorente2013}
Alejandro Llorente and Alberto Su{\'a}rez.
\newblock Critical sample size for the lp-norm estimator in linear regression
  models.
\newblock {\em 2013 Winter Simulations Conference (WSC)}, pages 1047--1056,
  2013.

\bibitem{Efron1986}
Bradley Efron.
\newblock Double exponential families and their use in generalized linear
  regression.
\newblock {\em Journal of the American Statistical Association},
  81(395):709--721, 1986.

\bibitem{Dempster1977}
A.~P. Dempster, N.~M. Laird, and D.~B. Rubin.
\newblock Maximum likelihood from incomplete data via the em algorithm.
\newblock {\em Journal of the Royal Statistical Society. Series B
  (Methodological)}, 39(1):1--38, 1977.

\bibitem{Meng1997}
Xiao-Li Meng and David van Dyk.
\newblock The em algorithm--an old folk-song sung to a fast new tune.
\newblock {\em Journal of the Royal Statistical Society. Series B
  (Methodological)}, 59(3):511--567, 1997.

\bibitem{Bach2012}
Simon Lacoste{-}Julien, Mark Schmidt, and Francis~R. Bach.
\newblock A simpler approach to obtaining an o(1/t) convergence rate for the
  projected stochastic subgradient method.
\newblock 2012.

\bibitem{Ding2021-storage}
Lijun Ding, Alp Yurtsever, Volkan Cevher, Joel~A. Tropp, and Madeleine Udell.
\newblock An optimal-storage approach to semidefinite programming using
  approximate complementarity.
\newblock {\em SIAM Journal on Optimization}, 31(4):2695--2725, 2021.

\bibitem{libsvm}
Libsvm data: Classification (binary class).
\newblock
  \url{https://www.csie.ntu.edu.tw/~cjlin/libsvmtools/datasets/binary.html}.
\newblock Accessed: 2023-06-1.

\bibitem{Goemans1995}
Michel~X. Goemans and David~P. Williamson.
\newblock Improved approximation algorithms for maximum cut and satisfiability
  problems using semidefinite programming.
\newblock {\em J. ACM}, 42(6):1115–1145, nov 1995.

\bibitem{Gset}
Timothy~A. Davis and Yifan Hu.
\newblock The university of florida sparse matrix collection.
\newblock {\em ACM Trans. Math. Softw.}, 38(1), dec 2011.

\bibitem{GrimmerLi2023}
Benjamin Grimmer and Danlin Li.
\newblock Some primal-dual theory for subgradient methods for strongly convex
  optimization, 2023.

\bibitem{Grimmer2021rateLifting}
Benjamin Grimmer.
\newblock General h{\"o}lder smooth convergence rates follow from specialized
  rates assuming growth bounds.
\newblock {\em Journal of Optimization Theory and Applications}, 197:51--70,
  2021.

\end{thebibliography}
    }

\end{document}